\newtheorem{theorem}{Theorem}[section]
\newtheorem{proposition}[theorem]{Proposition}
\theoremstyle{definition}
\theoremstyle{remark}
\numberwithin{equation}{section} \errorcontextlines=0
\renewcommand{\a}{\alpha}
\newcommand{\la}{\lambda}
\newcommand*{\longhookrightarrow}{\ensuremath{\lhook\joinrel\relbar\joinrel\rightarrow}}
\newcommand{\tr}{\mathrm{tr}}
\begin{document}

\title[Spin Characters of Hyperoctahedral Wreath Products]
{Spin Characters of Hyperoctahedral Wreath Products}
\author{Xiaoli Hu}
\address{Hu: School of Mathematics and Computer Science,
Jianghan University, Wuhan, Hubei 430056, China}
\email{xiaolihumath@163.com}
\author{Naihuan Jing$^*$}
\address{Jing: Department of Mathematics,
   North Carolina State University,
   Raleigh, NC 27695, USA
   %and School of Mathematical Sciences, South China University of Technology,
%   Guangdong, Guangzhou 510640, China
   }
\email{jing@math.ncsu.edu}
\keywords{hyperoctahedral groups; wreath products; spin
 characters}
\begin{abstract}
The irreducible spin character values of the wreath products of the hyperoctahedral groups with an arbitrary finite group are determined.
\end{abstract}
\thanks{Supported by NSFC grants 11271138, 11426116 and Simons Foundation grant 198129.}
\subjclass[2010]{Primary: 20C25; Secondary: 20C30, 20B30, 20C15} %58A17, 15A75} 17B37;
\thanks{$^\star$ Corresponding Author}
\maketitle

\section{ Introduction}

Irreducible spin characters of the symmetric group ${S}_n$ were studied by Schur \cite{Sch} as one
of the important examples in representation theory. Schur first derived a Frobenius-type
formula for the nontrivial spin
character values at conjugacy classes corresponding to
partitions with odd integer parts in terms of
Schur's $Q$-functions, which is similar to the case of the symmetric group. Furthermore, Schur showed that beyond the
Frobenius-type formula, there are other spin character values on special
conjugacy classes corresponding to (odd) strict partitions, which are not given by his Q-functions but
derived via his new twisted tensor product of basic spin representations.

After Schur's classic paper there have been quite a number of papers devoted to
the spin groups and in particular, hyperoctahedral spin groups and their generalizations. Morris gave
an alternative description of the twisted tensor product \cite{Mo} and derived an
iterative formula for Schur's $Q$-functions. Later Morris also studied double covering groups
of Weyl groups following Schur's theory \cite{Mo2}. Sergeev \cite{Se} showed that
representation theory of the twisted hyperoctahedral group $\widetilde{H}_n$ is similar to that of the spin group
$\widetilde{S}_n$ (cf. \cite{N, Jo1}),
and proved that Schur's $Q$-functions also served as generating functions for some
irreducible spin  supercharacters of the hyperoctahedral groups.
Finally Yamaguchi \cite{Ya1} clarified this relationship and established an equivalence between the twisted group
algebra of the symmetric group and the queer Lie superalgebra.

For the wreath product  $\widetilde{\Gamma}_n=\Gamma\wr \widetilde{S}_n$ of a finite group $\Gamma$ by the spin symmetric group $\widetilde{S}_n$, most part of the spin character table was determined by Frenkel,
Wang and the second author using the vertex operator calculus \cite{JW}. The remaining part was completed by the authors \cite{HJa, HJ} using
 Clifford algebras. In 2002, Wang and the second author \cite{JW} generalized the
 method to study the semi-direct product $\widetilde{H}\Gamma_n$ of the spin hyperoctahedral group $\widetilde{H}_n$ by
 a finite group $\Gamma$.  The status of the character theory can be easily described as follows. If we denote by
$\Gamma_*$ (resp. $\Gamma^*$) the set of conjugacy classes (resp. irreducible characters) of
$\Gamma$, then the irreducible spin characters of $\widetilde{H}\Gamma_n$ are determined by pairs of the strict partition-valued functions on $\Gamma^{*}$ and the spin character values at the even conjugacy classes are provided by vertex operators in \cite{JW}. However, the spin character values at odd strict partition-valued functions of $\Gamma_*$ are still unknown.

This paper aims to give the complete spin character table of the semi-direct product $\widetilde{H}\Gamma_n$,
which include projective characters of various Weyl groups and their wreath products. Part of the
motivation is to extend the previous type $A$ theory (the character theory of $\widetilde{\Gamma}_n$) to type $B$ or $D$. The following table shows the situation with the new case. Here $\mathcal{SP}^1_n(\Gamma_{*})$ (resp. $\mathcal{SP}^1_n(\Gamma^{*})$) denotes the set of odd strict partition-valued functions on $\Gamma_{*}$ (resp. $\Gamma^{*}$) and $\mathcal{OP}_n(\Gamma_{*})$ (resp. $\mathcal{OP}_n(\Gamma^{*})$) denotes the set of
odd integer partition-valued functions on $\Gamma_{*}$ (resp. $\Gamma^{*}$), where  $\Gamma_*$ (resp. $\Gamma^*$) denotes the set of
conjugacy classes (resp. irreducible characters) of the finite group $\Gamma$. The unknown part of the character values so far is
denoted by a question mark. Note that there are two parities for spin hyperoctahedral wreath products and parition-valued functions, it
happens that they are compatible for split conjugacy classes (see Prop. \ref{p:splitconj}).
\begin{table}[!hbp]
\begin{tabular}{|c|c|c|c|}
\hline
\multirow{2}{*}{}  &\multicolumn{2}{|c|}{$D_{\rho}^{\pm}$}\\
\cline{2-3}
 &&\\
Spin chars $\diagdown$ Conj. classes &$\rho\in\mathcal{OP}_n(\Gamma_{*})$ (even) &$\rho\in\mathcal{SP}^1_n(\Gamma_{*})$ (odd) \\
&&\\
\hline
$\nu\in \mathcal{SP}_n(\Gamma^{*}) $ & &\\
 $l(\nu)$ even&vertex~ operators&\quad  0 \quad\\
$\chi_{\nu}$&&\\
\cline{1-3}
$\nu\in \mathcal{SP}_n(\Gamma^{*}) $& & \\
 $l(\nu)$ odd&vertex~ operators&\quad  ? \quad\\
$\chi^{\pm}_{\nu}$&&\\
\hline
\end{tabular}
\caption{Spin character table of $\widetilde{H}\Gamma_n$}
\end{table}

The spin characters computable by vertex operators are actually character values that come from
certain supermodules of the wreath products of hyperoctahedral groups. It turns out that
the characters values at conjugacy classes indexed by partition-valued functions
with odd integer parts are the only non-zero values for certain modules, which then give the
first part shown in the table.

Our method to uncover the other spin character values is a detailed analysis
of difference characters for the ordinary modules and also how they are
related with the characters of the supermodules. We generalize
J\'ozefiak's supermodule approach \cite{Jo, Jo1} to study spin representations of $\widetilde{H}\Gamma_n$ and compute the associate spin characters at the odd strict conjugacy classes.
While we take the advantage of the similarity between the representation theories of
the wreath products and their hyperoctahedral counterparts, we also need to single
out the difference in order to determine the nontrivial spin character values of the
latter groups. Moreover, there is also another new point
for hyperoctahedral wreath products. Previous work has been emphasizing
 their representations as a super theory, however in the cur\-rent situation
 one needs to disengage the super theory to get their ordinary counterpart.

\section{The spin hyperoctahedral groups}

Let $S_n$ be the symmetric group on $n$ letters for $n\geq 4$. The hyperoctahedral group $H_n:=\mathbb{Z}_2\wr S_n$
 is  generated by transpositions $s_i=(i,i+1)\ (1\leq i \leq n-1)$
and involutions $b_j\  (1\leq j\leq n)$ with the relations
\begin{equation}
\begin{split}
s_i^2=1, (s_is_{i+1})^3=1, (s_is_j)^2=1  \mbox{~for~} |i-j|\geq 2,\\
s_ib_i=b_{i+1}s_i, s_ib_j=b_js_i \mbox{~for~} j\neq i, i+1,\\
b_j^2=1, b_ib_j=b_jb_i \mbox{~for~} i\neq j. % 2\leq i\leq n-1.\\
\end{split}
\end{equation}
We denote by $\widetilde{H}_n$ the group generated by $s_i\ (1\leq i\leq n-1)$ and $a_j\ (1\leq j\leq n)$ and $z$
with the following relations:
 \begin{align}\label{eq,1}
 s_i^2=1, (s_is_{i+1})^3=1,\quad (s_is_j)^2=1 \mbox{~for~} |i-j|\geq 2, \\ \label{eq1-2}
 s_ia_i=a_{i+1}s_i,\quad s_ia_j= a_js_i \mbox{~for}~j\neq i,i+1,\\ \label{eq1-3}
 a_j^2=z, \quad a_ia_j=za_ja_i \mbox{~for~} i\neq j, %2\leq i\leq n-1.
 \end{align}
where $z$ belong to the center and $z^2=1$.

Note that
 $\widetilde{H}_n$ is a semidirect product of the symmetric group
 $S_n=\langle s_i| 1\leq i\leq n-1\rangle$ and the $2$-group generated by $a_j$ subject to
 the relations (\ref{eq1-3}).
Furthermore, $\widetilde{H}_n$ is a central extension of $H_n=\mathbb{Z}_2^n\rtimes S_n$ through
\begin{equation}\label{e:ext}
1\longrightarrow \langle z\rangle\longhookrightarrow%{\text{embedding}}
\widetilde{H}_n \xrightarrow{\text{$\ \ \theta_n \ \ $}} H_n\longrightarrow 1,
\end{equation}
where $\theta_n$ is the group homomorphism sending  $s_i$ to $(i,i+1)$ for $1\leq i\leq n-1$, $a_j$ to $b_j$ for $1\leq i\leq n$, and $z$ to 1.
The group $\widetilde{H}_n$ is $\mathbb Z_2$-graded with the
parity defined by $p(a_i)=1$ and $p(s_i)=p(z)=0$. With this parity the group algebra $\mathbb{C}(\widetilde{H}_n)$ becomes a superalgebra.

The spin symmetric group  $\widetilde{S}_n$ is  the finite group
generated by $t_i \ (1\leq i\leq n-1)$ and the central element $z$ with the relations
\begin{equation}
 t_i^2=z,\quad z^2=1,\quad(t_it_{i+1})^3=1,\quad t_it_j=zt_jt_i \mbox{~for~} |i-j|\geq 2.
\end{equation}
 For each $n>1$, let $\mathscr{A}_n$ denote the algebra generated by $\varsigma_i \ (1\leq i \leq n-1)$ and satisfy the following relations:
\begin{equation}\label{e:algA}
\begin{split}\varsigma_i^2&=-1 \ \ \ (1\leq i\leq n-1),\\
 (\varsigma_i\varsigma_j)^2&=-1\ \ \  (|i-j|\geq 2),
\\
(\varsigma_i\varsigma_{i+1})^3&=-1\ \ \ ( 1\leq i\leq n-2).
\end{split}
\end{equation}
The algebra $\mathscr{A}_n$ is also a superalgebra under the parity
that $d(\varsigma_i)=1\ (1\leq i\leq n-1)$. In fact
$\mathscr{A}_n$ is isomorphic (as a superalgebra) to the twisted group algebra
$\mathbb{C}(\widetilde{S}_n)/(1+z)$ via
$\varsigma_i\mapsto t_i$ \cite{Sch}.

For $n>1$, let ${\mathscr{B}}_n$ be the algebra generated by
$\omega_i\ (0\leq i \leq n-1)$
satisfying
\begin{align}
& \omega^{2}_i=1 ~(0\leq i\leq n-1), \ (\omega_i\omega_{i+1})^3=1 ~(1\leq i \leq n-2), \\
&(\omega_i\omega_j)^2=1 ~ (|i-j|\geq 2), \  \  (\omega_0\omega_1)^4=-1.
\end{align}
The algebra ${\mathscr{B}}_n$ is the group algebra of a double covering group of the Coxeter group $H_n$
and also a superalgebra by defining $d(\omega_0)=1$ and $d(\omega_i)=0$ for $1\leq i \leq n-1$. Moreover ${\mathscr{B}}_n\cong \mathbb{C}[\widetilde{H}_n]/(1+z)$ via $\omega_i\mapsto t_i$ ($1\leq i\leq n-1$) and $\tau_i=\omega_{i-1}\cdots \omega_1\omega_0\omega_1\cdots\omega_{i-1}\mapsto a_{i}$ for $i=1, \ldots, n$.
In particular, $\omega_0\equiv \tau_1$.

Let ${\mathscr{C}}_n$ be the Clifford algebra generated by $\xi_i\ (i=1,\ldots,n)$ subject to the relations
 $$\xi_i^2=1, \ \ \xi_i\xi_j=-\xi_j\xi_i\ \ (i\neq j).$$
It becomes a superalgebra under the parity $d(\xi_i)=1$. %Here all $\xi_i$ are odd elements, i.e. the parity .
Let ${\mathscr{C}}_n\hat{\otimes} {\mathscr{A}}_n$ denote the super tensor product of the superalgebras ${\mathscr{C}}_n$ and ${\mathscr{A}}_n$. It is proved in \cite{Ya1} that the super tensor product
${\mathscr{C}}_n\hat{\otimes} {\mathscr{A}}_n\simeq {\mathscr{B}}_n$ as superalgebras under the map $\vartheta$:
\begin{equation}\label{e:theta}
\begin{split}
\xi_i\otimes 1&\mapsto \tau_i \ \ (1\leq i\leq n), \\
1\otimes \varsigma_j&\mapsto \frac{1}{\sqrt{2}}(\tau_j-\tau_{j+1})\omega_j, \ \ (1\leq j\leq n-1)
\end{split}
\end{equation}
where $\tau_i$ are defined above.

\section{Conjugacy classes of wreath products}

 For a finite  group $\Gamma$, set
$\Gamma^n=\Gamma\times\cdots\times\Gamma$, the $n$-th direct product of $\Gamma$. The twisted
hyperoctahedral group $\widetilde{H}_n$  acts on
$\Gamma^n$ by permuting the factors:
\begin{equation}
\begin{split}
z\cdot(g_1,\ldots,g_n)=&(g_1,\ldots,g_n),\\
a_i\cdot(g_1,\ldots,g_n)=&(g_1,\ldots,g_n),\ \ i=1,\ldots,n\\
\sigma\cdot(g_1,\ldots,g_n)=&(g_{\sigma^{-1}(1)},\ldots,g_{\sigma^{-1}(n)}),
\sigma\in S_n.
\end{split}
\end{equation}
Let $\widetilde{H}\Gamma_n$ be
the semi-direct product of $\Gamma^n$ by  $\widetilde{H}_n$  with
the multiplication:
\begin{equation}
(g,a\sigma)\cdot(g^{'},a^{'}\sigma^{'})=(g\sigma(g^{'}),
a\sigma(a^{'})\sigma\sigma^{'}),
\end{equation}
where $g,g^{'}\in \Gamma^n, a, a^{'}\in \Pi_n,
\sigma,\sigma^{'}\in S_n$. From now on, we write $H\Gamma_n$ for the wreath product of $S_n$ by $ \Gamma\times\mathbb{Z}_2 $. Note that $\widetilde{H}\Gamma_n$ is a
double cover of  $H\Gamma_n$.
In the following we use the notations of \cite{JW} for $\widetilde{H}\Gamma_n$.
For a subset $I=\{i_1,\ldots,i_m\}$ of
$\{1,\ldots,n\}$, we set $a_{I}=a_{i_1}a_{i_2}\cdots a_{i_m} \in\widetilde{H}_n$ and $b_I=b_{i_1}\cdots b_{i_m}\in H_n$, where $b_{i}\ (i=1,\ldots,n)$ are the
generators of $\mathbb{Z}_2^n$. 

We recall that a partition $\la$ of $n$ is a sequence  of non increasing positive integers $\la_i$ called parts
such that $\sum_i\la_i=n$. We denote by $l(\lambda)$ the number of parts or length of $\lambda$, and $\parallel\lambda\parallel=\la_1+\la_2+\cdots$ the weight of $\lambda$. A partition $\la$ is called a {\it strict partition} if all parts $\la_i$ are distinct.
We also use the notation $\la=(1^{m_1(\la)}2^{m_2(\la)}\cdots)$ for the partition $\la$, where
$m_k(\la)$ is the multiplicity of $k$ in the parts $\la_i$'s.

The conjugacy classes of ${H}\Gamma_n$ are parametrized by
 a pair of partition-valued functions.
For a cycle $\sigma=(i_1\ldots i_m)\in S_n$, the $supp(\sigma)$ is defined to be $I=\{i_1,\ldots,i_m\}$. For $(g,
b_{I}\sigma)\in H\Gamma_n,$ the element $b_I\sigma$ can be
uniquely written (up to order) as a product
$$b_I\sigma=(b_{I_1}\sigma_1)\cdots(b_{I_m}\sigma_m),$$
where $\sigma=\sigma_1\cdots\sigma_m$ is a product of disjoint
cycles of $S_n$ and $I_k\subset supp(\sigma_k)$ for $k=1,\ldots, m$ \cite{JW}.
We call $b_{I_a}\sigma_a$ a signed cycle of $b_I\sigma$
with the sign $(-1)^{|I_a|}$. For each signed cycle
$b_{I_k}\sigma_k$ with $\sigma_k=(j_1,\ldots,j_m)$
the {\it {signed cycle-product}} of $b_{I_k}\sigma_k$ is defined to be
$$(-1)^{|I_k|}g_{j_m}g_{j_{m-1}}\cdots g_{j_2}g_{j_1}.$$

 For each $c\in \Gamma_*$, let
$m^{+}_k(c)$ (resp. $ m_k^{-}(c))~(k\geq 0)$ be the number of
$k$-cycles of the permutation $\sigma$ such that its positive (resp. negative) signed cycle-product
lies in the conjugacy class $c$. For each $c\in \Gamma_*$, set $$\rho^{+}(c)=(1^{m_1^{+}(c)}
2^{m_2^{+}(c)}\ldots ) \ \ \hbox{and}\ \ \rho^{-}(c)=(1^{m_1^{-}(c)}
2^{m_2^{-}(c)}\ldots ),$$
which are partitions indexed by $c\in\Gamma_{*}$. Then $$\rho^{+}=(\rho^{+}(c))_{c\in\Gamma_{*}}, \ \ \rho^{-}=(\rho^{-}(c))_{c\in\Gamma_{*}}$$
are two signed partition-valued functions on $\Gamma_{*}$.
Moreover, $\rho=(\rho^{+},\rho^{-})$ defines a pair of partition-valued functions on $\Gamma_*$ with $l(\rho)=l(\rho^{+})+l(\rho^{-})$,
where $l(\sigma)$ is the total length of $\sigma$, and
$||\rho^{+}||+||\rho^{-}||=||\rho||$, the total weight of the partition-valued function.
It is proved \cite{JW} that the conjugacy classes of $H\Gamma_n$ are
indexed by pairs of partition-valued functions $\rho=(\rho^{+},\rho^{-})$ such that
$||\rho||=n$.
In particular,
$\rho=(\rho^{+},\rho^{-})$ is called the type of the element
$(g,b_{I}\sigma)\in H{\Gamma}_{n}$. Two elements of $H\Gamma_n$ are conjugate if and only if they have the same type.

We write $C_{\rho^{+},\rho^{-}}$ for the conjugacy
class corresponding to  elements of type $(\rho^{+},\rho^{-})$. Furthermore, as any element of $H\Gamma_n$ is of the form $(g,b_I\sigma)$, so
a conjugacy class $C_{\rho^{+},\rho^{-}}$ is called {\it{even}} or {\it{odd}} if the cardinality $|I|$ is even or odd.
Thus the general element of $\widetilde{H}\Gamma_n$ is of the
form $(g,(-1)^pa_I\sigma)$ for $p=\pm1$, where
$(-1)^pa_I\sigma=(-1)^p(a_{I_1}\sigma_1)\cdots (a_{I_q}\sigma_q)$ and
$\sigma=\sigma_1\cdots\sigma_q$ is a disjoint union of cycles in
$ S_n$ \cite{JW}. We define the {\it parity} $p$ on $\widetilde{H}\Gamma_n$ by
$p(g,(-1)^pa_I\sigma)=|I| \mod 2$. Note that
this parity generalizes the parity $p$ for $\widetilde{H}_n$ (below \eqref{e:ext}). Therefore the group algebra $\mathbb C[\widetilde{H}\Gamma_n]$
becomes a superalgebra.

Let $\mathcal{P}_n(\Gamma_{*})$ denote the set of partition-valued functions of $n$ on $\Gamma_*$. Let $\mathcal{OP}_n(\Gamma_{*})$ (resp. $\mathcal{SP}_n(\Gamma_{*})$) be the subset of $\mathcal{P}_n(\Gamma_{*})$ such that each part is odd (resp. every partition is strict).
For $i=0, 1$, let $\mathcal{SP}^{i}_n(\Gamma_{*})=\{\rho\in \mathcal{SP}_n(\Gamma_{*})| i=(n-l(\rho))\mod 2\}$. When $\Gamma=1$, the partition-valued functions become partitions, so we denote $\mathcal{OP}_n$, $\mathcal{SP}_n$  and $\mathcal{SP}^{i}_n (i=0, 1)$.
It is well-known that the set $\mathcal{SP}_n$ of strict partitions of $n$ is
one-to-one correspondent to the set $\mathcal{OP}_n$ of partitions of $n$ by odd integers \cite{An}. %\cite{Sch}.

Denote by $D_{\rho^{+},\rho^{-}}$ the inverse image in $\widetilde{H}\Gamma_n$ of $H\Gamma_n$-conjugacy class $C_{\rho^{+},\rho^{-}}$ under the homomorphism $\theta_n$. For any
$x\in C_{\rho^{+},\rho^{-}}$, if $x$ is conjugate to $zx$ then $D_{\rho^{+},\rho^{-}}$ is also a $\widetilde{H}\Gamma_n$-conjugacy class; Otherwise,
$D_{\rho^{+},\rho^{-}}$ is a split class, in this case $C_{\rho^{+},\rho^{-}}$ (resp.$D_{\rho^{+},\rho^{-}}$) is called a split conjugacy class in $H\Gamma_n$ (resp.$\widetilde{H}\Gamma_n$). It is easy to see that only split conjugacy classes of $\widetilde{H}\Gamma_n$ can support nonzero spin character values.

\begin{proposition} \cite{JW}\label{p:splitconj}
The conjugacy class $C_{\rho^{+},\rho^{-}}$ in $H\Gamma_n$ is split into two conjugacy classes if
and only if

 (1) when  $C_{\rho^{+},\rho^{-}}$ is even, we have
$\rho^{+}\in \mathcal
 {OP}_n(\Gamma_{*})$ and $\rho^{-}=\emptyset.$

(2) when $C_{\rho^{+},\rho^{-}}$ is odd, we have $\rho^{+}=\emptyset$ and $\rho^{-}\in \mathcal
 {SP}_n^1(\Gamma_{*})$.
\end{proposition}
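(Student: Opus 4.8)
The plan is to determine exactly when an element $\tilde x\in D_{\rho^+,\rho^-}$ fails to be conjugate to $z\tilde x$. Fix a lift $\tilde x=(g,(-1)^pa_I\sigma)$ of $x=(g,b_I\sigma)\in C_{\rho^+,\rho^-}$. Since $\theta_n$ is surjective with kernel $\langle z\rangle$, conjugacy in $\widetilde{H}\Gamma_n$ projects onto conjugacy in $H\Gamma_n$, so every $\widetilde y$ conjugate to $\widetilde x$ lies in $D_{\rho^+,\rho^-}$ and equals $z^\epsilon\cdot(\text{some lift of a conjugate of }x)$ for $\epsilon\in\{0,1\}$. Hence the class $C_{\rho^+,\rho^-}$ splits precisely when for every $h\in\widetilde{H}\Gamma_n$ with $\theta_n(h)$ centralizing $x$ we have $h\widetilde x h^{-1}=\widetilde x$ (not $z\widetilde x$); equivalently, the $2$-cocycle-twisted ``sign'' function $\kappa(h):=h\widetilde xh^{-1}\widetilde x^{-1}\in\langle z\rangle$ is identically trivial on the preimage of the centralizer $C_{H\Gamma_n}(x)$. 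So the whole problem reduces to computing this sign homomorphism on centralizer elements.

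**The computation** proceeds by first reducing to a single signed cycle. Writing $\widetilde x$ as a product of commuting (up to powers of $z$) lifted signed cycles $(a_{I_k}\sigma_k)$ with $\Gamma$-decorations, one checks how conjugation by (i) a lifted cycle in the same $S_n$-support, (ii) a lifted $a_j$, and (iii) an element of $\Gamma^n$ affects each factor, using the defining relations \eqref{eq,1}--\eqref{eq1-3}: the key relations are $a_j^2=z$, $a_ia_j=za_ja_i$ for $i\neq j$, and $s_ia_i=a_{i+1}s_i$. The upshot is two independent obstructions. First, there is the classical type $A$ obstruction living inside $\widetilde S_n$: a single $k$-cycle $\widetilde\sigma_k$ satisfies $\widetilde\sigma_k=z\,\widetilde\sigma_k$ under centralizer conjugation exactly when $k$ is even, and more generally two equal-length cycles can be swapped introducing a $z$; this forces, on the symmetric part, that all cycle lengths be odd and all cycles carrying a given signed cycle-product in the same class $c$ be distinct — i.e. the partition $\rho^\bullet(c)$ is a strict partition with odd parts, which (since a strict partition of a fixed number with odd parts is just an odd partition that happens to be strict, and the parity condition $\mathcal{SP}^1_n$ records $n-l(\rho)$ odd) is repackaged via the bijection $\mathcal{SP}_n\leftrightarrow\mathcal{OP}_n$ mentioned after Prop.~\ref{p:splitconj}. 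Second, there is the new $\mathbb Z_2$-part obstruction from the $a_j$'s: conjugating a lifted signed cycle $(a_{I_k}\sigma_k)$ by another $a_j$ multiplies by $z^{|I_k|}$ (roughly), and conjugating by its own cycle produces $z^{\binom{|I_k|}{2}}$-type factors; assembling these over all signed cycles shows the total sign vanishes on the centralizer only when $|I|\equiv 0,1\pmod{\text{something}}$ in a way that splits cleanly into the even case ($|I|$ even: need $\rho^-=\emptyset$, because a negative signed cycle forces a parity defect) and the odd case ($|I|$ odd: need $\rho^+=\emptyset$). Combining the two obstructions gives exactly the stated dichotomy.

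**The main obstacle** will be the bookkeeping of the $\mathbb Z_2$-part: one must carefully track how the anticommutation $a_ia_j=za_ja_i$ interacts with the $S_n$-action $s_ia_i=a_{i+1}s_i$ when conjugating a signed cycle by elements of its own centralizer, and in particular show that the centralizer of a signed cycle $b_I\sigma$ in $H\Gamma_n$ lifts to elements of $\widetilde{H}\Gamma_n$ whose sign contribution depends only on $|I|\bmod 2$ and the cycle length $\bmod 2$ — \emph{and} that these two contributions do not accidentally cancel. Showing non-cancellation (so that the condition is genuinely ``$\rho$ strict with odd parts'' \emph{and} ``all signs on one side'') rather than merely some congruence among the combined data is the delicate point; the parity compatibility noted before the proposition (the fact that the two parities agree on split classes) is exactly what makes the bookkeeping collapse to the clean statement. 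I would handle this by explicitly exhibiting, in the non-split cases, an element of the centralizer witnessing $\widetilde x\sim z\widetilde x$ (e.g. $a_j$ for an even $|I|$ together with an $\emptyset\neq\rho^-$ part, or a transposition swapping two equal cycles), and conversely, in the split cases, showing every centralizer generator fixes $\widetilde x$ on the nose.
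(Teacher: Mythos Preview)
The paper does not prove this proposition; it is quoted from \cite{JW} without argument, so there is no in-paper proof to compare against. Your outline is therefore a proposed proof of a cited background result.

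Your overall strategy---reduce to whether $\widetilde x$ is conjugate to $z\widetilde x$, then test this on generators of the centralizer, cycle by cycle---is correct and is essentially how such results are proved. But your identification of the mechanism is wrong in a way that would derail the computation. You invoke ``the classical type $A$ obstruction living inside $\widetilde{S}_n$: a single $k$-cycle $\widetilde\sigma_k$ satisfies $\widetilde\sigma_k=z\widetilde\sigma_k$ under centralizer conjugation exactly when $k$ is even, and more generally two equal-length cycles can be swapped introducing a $z$.'' In the cover $\widetilde{H}_n$ defined here, however, the symmetric group is \emph{untwisted}: the relations \eqref{eq,1} read $s_i^2=1$ and $(s_is_j)^2=1$ for $|i-j|\ge 2$, with no $z$ anywhere. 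The $s_i$ generate an honest copy of $S_n$ inside $\widetilde{H}_n$, so conjugating a bare cycle $\sigma_k$ by anything in $S_n$ never produces a factor of $z$. Every $z$ in the problem arises from the Clifford-type relations \eqref{eq1-3} among the $a_j$'s.

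Concretely, the odd-parts condition in case (1) comes not from a $\widetilde S_n$-argument but from the $a_j$'s: for a bare $k$-cycle $\sigma=(1\cdots k)$ the element $b_1\cdots b_k$ lies in its $H_n$-centralizer, and one computes $(a_1\cdots a_k)\,\sigma\,(a_1\cdots a_k)^{-1}=z^{\,k-1}\sigma$. Likewise the strictness condition in case (2) arises because an $S_n$-element swapping two equal negative signed cycles permutes their $a_{I_k}$-blocks, and the anticommutation $a_ia_j=za_ja_i$ then supplies the $z$. So both phenomena that \emph{look} like the $\widetilde S_n$ story are in fact driven entirely by the $a_j$'s.

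This also explains why your intermediate summary (``strict partition with odd parts'') conflates the two cases. Case (1) requires only odd parts, with no strictness: repeated odd parts are perfectly allowed, precisely because swapping two equal bare cycles via the untwisted $S_n$ introduces no $z$. Case (2) requires only strictness (with the parity constraint encoded in $\mathcal{SP}^1_n$), not odd parts. The bijection $\mathcal{SP}_n\leftrightarrow\mathcal{OP}_n$ you cite is an equinumerosity and plays no role in the argument. If you rerun your plan tracking the $z$'s through \eqref{eq1-3} alone, the stated dichotomy does fall out; as written, the sketch would not.
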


 In view of Prop. \ref{p:splitconj}, when $C_{\rho^{+},\rho^{-}}$ is split,  we can simply denote $D_{\rho}=\theta_n^{-1}(C_{\rho^{+},\rho^{-}})=D_{\rho}^{+}\cup D_{\rho}^{-}$,
where $D_{\rho}^{-}=z D_{\rho}^{+}$. Consequently
the order of the centralizer of an element in $D_{\rho}$ of $\widetilde{H}\Gamma_n$ is given by
\begin{equation*}%\label{eq:1}
\widetilde{Z}_{\rho}=2^{1+l(\rho)}Z_{\rho},
 \end{equation*}
where $Z_{\rho}=\prod_{c\in\Gamma_*, i\in\mathbb N}i^{m_i(c)}m_i(c)!\zeta_c^{l(\rho(c))}$  and $\zeta_c$ is the order of the centralizer of an element in
conjugacy class $c$ of $\Gamma$. For convenience, we also denote
$z_{\rho(c)}=\prod_{i\in \mathbb N}i^{m_i(c)}m_i(c)!$, so $Z_{\rho}=\prod_{c\in\Gamma_*}z_{\rho(c)}$.

\section{Character formulas for $\mathscr{A}_n$ and $\mathscr{B}_n$}
A {\it spin representation} of $\widetilde{S}_n (or\ \widetilde{H}_n)$ is an ordinary representation $\pi$ of $\widetilde{S}_n (or\ \widetilde{H}_n)$
 such that $\pi(z)=-1$.
From superalgebra viewpoint J\'{o}zefiak \cite{Jo1} computed the spin super-character values of $\widetilde{H}_n$ based on
\cite{Sch}. A general discussion was given earlier in Morris and Jones \cite{MJ} on spin characters. The foundation of all these works
is Schur's theory \cite{Sch} that most character values of the
spin symmetric group $\widetilde{S}_n$ are given by transition matrices
between the basis of Schur's $Q$-functions $Q_{\la}$
and that of power-sum symmetric functions $p_{\mu}$ in the subring
of symmetric functions generated by the power-sums of odd degrees: $p_1, p_3, \ldots$.
For basic information on Schur's $Q$-functions, see Macdonald's classic monograph \cite{Ma}.
As the exceptional spin character values are not given by symmetric functions,
we will compute them for $\widetilde{H}_n$ in this section.

Recall that $\mathscr A_n$ a quotient of the group algebra $\mathbb C\widetilde{S}_n$ (see \eqref{e:algA}). As a superalgebra $\mathscr{A}_n$ is a direct product of finitely many simple superalgebras.
For background materials on superalgebras, we refer the reader to \cite{Jo}. For a modern account of Schur's twisted tensor products, see also \cite{St}.
Simple (complex) superalgebras have two types: (1) Type $M$,
$M(r|s)$ is the block $2\times 2$ matrices with the main diagonal blocks
are $r\times r$-matrices and $s\times s$ matrices. The degree zero part consists of diagonal blocks
and degree one is formed by off-diagonal blocks.
(2) Type $Q$, $Q(n)$ is a block $2\times 2$ matrices
whose two main
diagonals and off-diagonal are the same $n\times n$ matrices. The $\mathbb Z_2$-grading is similar to
that of type $M$.
We will call an irreducible supermodule type $M$ (resp. $Q$) if it corresponds to the simple
superalgebra of type $M$ (resp. $Q$).

The Clifford superalgebra $\mathscr C_n$
is a simple superalgebra of type $M$ (resp. $Q$) according to $n$ even (resp. odd).
Subsequently $L_n=\mathscr C_n$ is a simple $\mathscr C_n$-module of type $M$ (resp. $Q$) if $n$ is even (resp. odd).

\begin{proposition}\label{p:Sch}(Schur \cite{Sch})
(1) The irreducible $\mathscr{A}_n$-supercharacters are indexed by $\mathcal {SP}_n$.
 For $\nu\in \mathcal {SP}_n$, let $\zeta_{\nu}$ be the character with type $\nu$, then it is completely determined by its values $\zeta_{\nu}^{\a}$ at the conjugacy class indexed by $\a \in \mathcal {OP}_n$ and
\begin{equation}
    Q_{\nu}=\sum_{\la\in \mathcal{OP}_n} 2^{\frac{l(\nu)+l(\la)+p(\nu)}{2}}z^{-1}_{\la}\zeta^{\la}_{\nu}p_{\la},
\end{equation}
where 
$p(\nu)$ is the parity of $\nu$ and $z_{\lambda}$ is the order of the centralizer of an element with cycle type $\lambda$ in $S_n$.

(2) The supercharacter $\zeta_{\nu}$ is of type $M$ (resp. $Q$) when
$n-l(\nu)$ is even (resp. odd).
\end{proposition}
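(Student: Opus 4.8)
The plan is to carry out the standard superalgebra-theoretic argument (Schur, in the formulation of J\'ozefiak and Stembridge), realizing the irreducible $\mathscr{A}_n$-supercharacters through a characteristic map onto the ring of symmetric functions generated by the odd power sums. Since $\mathscr{A}_n$ is semisimple it is a direct product of simple superalgebras, each of type $M(r|s)$ or $Q(k)$, and each summand contributes exactly one irreducible supermodule with supercharacter $\zeta$. On the space of spin class functions of $\tS{n}$ (those $f$ with $f(zx)=-f(x)$, which vanish automatically off the split classes) one introduces the usual inner product and a linear map $\ch$ into the subring $\Omega$ of symmetric functions generated by $p_1,p_3,p_5,\dots$, of the normalized form $\ch(f)=\sum_{\la\in\mathcal{OP}_n} c_\la\, f^\la\, p_\la$; only the split classes indexed by $\mathcal{OP}_n$ enter because a supercharacter of a type-$Q$ module vanishes on odd elements, while the exceptional odd split classes carry difference characters rather than supercharacters.

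\emph{Isometry and the basic spin module.} One then checks that $\ch$ is an isometry onto $\Omega$, using the known orthogonality relations for the power sums and for the Schur $Q$-functions in $\Omega$, and that $\ch$ intertwines super-induction products with multiplication of symmetric functions (a Frobenius-reciprocity computation). The base case is obtained by realizing the basic spin module $T_m$ of $\tS{m}$ explicitly inside the Clifford algebra $\mathscr{C}_{m-1}$ (sending $t_i$ to normalized products of Clifford generators), computing its supercharacter on the classes indexed by $\mathcal{OP}_m$, and verifying $\ch(T_m)=q_m=Q_{(m)}$. Hence for $\nu=(\nu_1>\cdots>\nu_r)\in\mathcal{SP}_n$ the supermodule super-induced from $T_{\nu_1}\boxtimes\cdots\boxtimes T_{\nu_r}$ has characteristic $q_{\nu_1}\cdots q_{\nu_r}$.

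\emph{Triangularity, counting, and the formula.} By Macdonald's expansion $q_{\nu_1}\cdots q_{\nu_r}=\sum_{\mu}K_{\nu\mu}Q_\mu$ is unitriangular with respect to the dominance order on $\mathcal{SP}_n$, with $K_{\nu\nu}=1$. Matching this with the decomposition of the induced supermodule into irreducibles and arguing by downward induction on $\nu$ produces, for each $\nu\in\mathcal{SP}_n$, an irreducible supercharacter $\zeta_\nu$ with $\ch(\zeta_\nu)=Q_\nu$; since the $Q_\nu$ form a basis of $\Omega$ and $\ch$ is injective, these exhaust the irreducible $\mathscr{A}_n$-supercharacters, which gives the indexing in (1). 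Unwinding the normalization in $\ch$ and reading off the coefficient of $p_\la$ yields the displayed identity, the factor $2^{(l(\nu)+l(\la)+p(\nu))/2}$ arising from the orthogonality constants together with the extra $\sqrt{2}$ in the normalization of a type-$Q$ supercharacter (hence the $p(\nu)$ term). For (2), the type of $\zeta_\nu$ is tracked through the induction above: with $M\,\hat{\otimes}\,M=M$, $M\,\hat{\otimes}\,Q=Q$, $Q\,\hat{\otimes}\,Q=M$, and $T_m$ of type $M$ precisely when $m-1$ is even, the number of type-$Q$ factors among the $T_{\nu_i}$ equals the number of even parts of $\nu$, which is $\equiv\sum_i(\nu_i-1)=n-l(\nu)\pmod 2$; since the distinguished constituent $\zeta_\nu$ inherits the type of the induced module by unitriangularity, $\zeta_\nu$ is of type $M$ iff $n-l(\nu)$ is even. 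The routine ingredients are the structure theory of semisimple superalgebras and the unitriangularity of $q$-products against $Q$-functions; the delicate part — and the main obstacle — is the base case and the constant bookkeeping, namely the explicit Clifford model for $T_m$, the exact evaluation of its supercharacter, the identity $\ch(T_m)=q_m$, and the correct propagation of the powers of $2$ through the type-$M$ versus type-$Q$ normalizations and the super-induction products so as to land on the precise coefficient $2^{(l(\nu)+l(\la)+p(\nu))/2}$.
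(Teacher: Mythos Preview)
The paper does not supply its own proof of this proposition: it is stated as a classical result of Schur \cite{Sch} and simply cited, so there is nothing in the paper to compare your argument against. Your plan is a faithful outline of the standard modern treatment (in the spirit of J\'ozefiak and Stembridge) --- characteristic map into the ring generated by odd power sums, the basic spin module realized via the Clifford algebra with $\ch(T_m)=q_m$, unitriangularity of $q_{\nu_1}\cdots q_{\nu_r}$ against the $Q_\mu$, and the type bookkeeping via the parity of the number of even parts --- and as such it is a correct sketch of how one proves the result, with the caveat (which you yourself flag) that the constant-tracking leading to the exact exponent $\tfrac{l(\nu)+l(\la)+p(\nu)}{2}$ is where the real work lies.
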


For $\nu\in\mathcal{SP}_n$, let $V_{\nu}$ be the $\mathscr{A}_n$-module associated with $\nu$. Since $\mathscr{B}_n\simeq \mathscr{C}_n\hat{\otimes} \mathscr{A}_n$ and $\mathscr C_n$ has only one simple supermodule $L_n$,
 any irreducible super modules of $\mathscr{B}_n$ is a super tensor product $L_n\hat\otimes V_{\nu}$, where $V_{\mu}$ is
 irreducible $\mathscr{A}_n$-supermodule. Subsequently there is a one-to-one correspondence between irreducible $\mathscr{A}_n$-modules and
irreducible $\mathscr{B}_n$-modules.

\begin{proposition}\label{JO HG}(J\'{o}zefiak \cite{Jo2})
(1) The irreducible supercharacters of $\mathscr{B}_n$ are completely indexed by the strict partitions of $n$. For $\nu\in \mathcal{SP}_n$,  if $l(\nu)$ is even (resp. odd) then the corresponding supercharacter $\xi_{\nu}$ is of type $M$ (resp. $Q$).
\\
(2) For each $\nu\in \mathcal{SP}_n$, the supercharacter values are determined by
\begin{equation}
    Q_{\nu}=\sum_{\la\in \mathcal{OP}_n}2^{[\frac{l(\nu)}{2}]} z^{-1}_{\la}\xi^{\la}_{\nu}p_{\la}.
\end{equation}
where $[x]$ denotes the largest integer $\leq x$.
\end{proposition}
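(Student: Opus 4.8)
The plan is to deduce the statement for $\mathscr{B}_n$ from Schur's description of the $\mathscr{A}_n$-supercharacters (Proposition~\ref{p:Sch}) by transporting everything across the isomorphism $\vartheta\colon\mathscr{C}_n\hat{\otimes}\mathscr{A}_n\to\mathscr{B}_n$ of \eqref{e:theta}.

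First I would settle the labelling and the type statement of part~(1). Because $\mathscr{C}_n$ is simple with the unique simple supermodule $L_n$, super-tensoring $V_\nu\mapsto L_n\hat{\otimes}V_\nu$ matches the irreducible $\mathscr{A}_n$-supermodules with the irreducible $\mathscr{B}_n$-supermodules (when $L_n$ and $V_\nu$ are both of type $Q$ the product is isomorphic to twice a simple supermodule, which does not affect the bijection on isomorphism classes), so Proposition~\ref{p:Sch}(1) labels the irreducible $\mathscr{B}_n$-supercharacters by $\mathcal{SP}_n$. For the type I would invoke the multiplication rules for simple complex superalgebras, $M\hat{\otimes}M\cong M$, $M\hat{\otimes}Q\cong Q\hat{\otimes}M\cong Q$ and $Q\hat{\otimes}Q\cong M$: by Proposition~\ref{p:Sch}(2) $V_\nu$ is of type $M$ or $Q$ according as $n-l(\nu)$ is even or odd, while $\mathscr{C}_n$ is of type $M$ or $Q$ according as $n$ is even or odd, so running through the four cases (using $n\equiv(n-l(\nu))+l(\nu)\pmod 2$) one finds that $L_n\hat{\otimes}V_\nu$ is of type $M$ exactly when $l(\nu)$ is even, which is part~(1).

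For part~(2) I would only need the values of $\xi_\nu$ at the even split classes, which by Proposition~\ref{p:splitconj}(1) with $\Gamma=1$ are indexed by $\la\in\mathcal{OP}_n$. Fix such a $\la$ and let $w_\la\in\mathscr{B}_n$ represent that class as a product of disjoint cycles $\omega_j\omega_{j+1}\cdots\omega_{j+\la_i-2}$; it involves $N:=n-l(\la)$ of the generators $\omega_j$, and $N$ is even since every $\la_i$ is odd. Inverting \eqref{e:theta} gives $\vartheta^{-1}(\omega_j)=-\tfrac{1}{\sqrt 2}(\xi_j-\xi_{j+1})\otimes\varsigma_j$, so expanding the product in $\mathscr{C}_n\hat{\otimes}\mathscr{A}_n$ one obtains
\[
\vartheta^{-1}(w_\la)=\pm\,2^{-N/2}\,X_{\mathscr{C}}\otimes X_{\mathscr{A}},
\]
where $X_{\mathscr{A}}=\prod\varsigma_j\in\mathscr{A}_n$ is a representative of cycle type $\la$ in $\mathscr{A}_n$ and $X_{\mathscr{C}}=\prod(\xi_j-\xi_{j+1})\in\mathscr{C}_n$. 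Both factors have degree $N\bmod 2=0$, so the character of $L_n\hat{\otimes}V_\nu$ factors on this element as $\tr_{L_n}(X_{\mathscr{C}})\,\tr_{V_\nu}(X_{\mathscr{A}})$. On $L_n=\mathscr{C}_n$ acting by left multiplication only the scalar part of a Clifford monomial contributes to the trace, and a short computation — splitting $X_{\mathscr{C}}$ over the disjoint cycles, each contributing a factor $(\xi_j-\xi_{j+1})\cdots(\xi_{j+\la_i-2}-\xi_{j+\la_i-1})$ with scalar part $(-1)^{(\la_i-1)/2}$ — gives $\tr_{L_n}(X_{\mathscr{C}})=\pm\,2^{n}$. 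Hence, up to the conventional normalizations of the two supercharacter theories (and the extra factor $\tfrac12$ in the doubly-$Q$ case), $\xi_\nu^\la$ is a power of $2$ times $\tr_{V_\nu}(X_{\mathscr{A}})$, and tracking these scalars yields a relation $\xi_\nu^\la=2^{\kappa}\zeta_\nu^\la$ in which $\kappa$ is affine in $l(\la)$ with slope $\tfrac12$ and with intercept depending only on $n$ and the parity of $l(\nu)$. Substituting this into the identity of Proposition~\ref{p:Sch}(2), and using $l(\la)\equiv n\pmod 2$ so that Schur's exponent $\tfrac12(l(\nu)+l(\la)+p(\nu))$ is an integer whose $l(\la)$-term cancels the slope-$\tfrac12$ term of $\kappa$, the powers of $2$ combine, the $\la$-dependence disappears, and what remains is exactly $2^{[l(\nu)/2]}$; since $\{p_\la\}_{\la\in\mathcal{OP}_n}$ is linearly independent this also shows the formula determines the numbers $\xi_\nu^\la$.

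The step I expect to be the genuine obstacle is precisely this last bookkeeping with powers of $2$: one has to fix the conventions normalizing $\zeta_\nu$ and $\xi_\nu$ (ordinary trace versus supertrace, and the multiplicity with which the ordinary simple sits inside the regular module $L_n$), keep track of the $N/2$ factors of $1/\sqrt 2$ introduced by $\vartheta$, include the factor $\tfrac12$ when both $L_n$ and $V_\nu$ are of type $Q$, and verify that together with Schur's exponent $\tfrac12(l(\nu)+l(\la)+p(\nu))$ everything collapses to $[l(\nu)/2]$ uniformly in $\la$.
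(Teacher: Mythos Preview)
The paper does not prove this proposition; it is quoted from J\'{o}zefiak~\cite{Jo2} and stated without argument, beyond the one-line remark preceding it that $\mathscr{B}_n\simeq\mathscr{C}_n\hat{\otimes}\mathscr{A}_n$ and $\mathscr{C}_n$ has a unique simple supermodule, which already gives the bijection in part~(1). Your proposal is therefore not competing with a proof in the paper but supplying one, and the route you take---pull back across $\vartheta$ and factor the character as (Clifford trace)$\times$(Schur supercharacter)---is exactly the computation the paper itself carries out later, in the proof of Theorem~4.4, for the special elements $\sigma^{((n),\emptyset)}$ and $\sigma^{(\emptyset,(n))}$. So the strategy is sound and in the spirit of the paper.

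There is one concrete slip that would derail the bookkeeping. You write ``$L_n=\mathscr{C}_n$ acting by left multiplication'' and conclude $\tr_{L_n}(X_{\mathscr{C}})=\pm 2^{n}$. But $L_n$ is not the regular module: the paper defines $L_n=\mathscr{C}_n e$ for the idempotent $e=e_1\cdots e_r$ with $r=[n/2]$, and $\dim L_n=2^{\lceil n/2\rceil}$, not $2^{n}$. With the correct dimension the exponent you get is $\lceil n/2\rceil-(n-l(\la))/2-\delta$, where $\delta=1$ precisely in the $Q\hat{\otimes}Q$ case ($n$ odd and $l(\nu)$ even); combining this with Schur's exponent $\tfrac12(l(\nu)+l(\la)+p(\nu))$ then collapses to $[l(\nu)/2]$ as desired. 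If instead you insist on computing traces on the regular module $\mathscr{C}_n$ and dividing by the multiplicity of $L_n$ afterwards, you must track that this multiplicity itself depends on the parity of $n$, which is exactly the sort of hidden case split that makes the ``last bookkeeping'' step you flag genuinely error-prone. Using the explicit $L_n=\mathscr{C}_n e$ from the start is cleaner.
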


%%%%%%%%%%%%%%%%%%%%%%%%%%%%%%%%%%%%%%%%%%%%%%%%%%%%%%%%%%%%%
Now we consider the relationship between characters of modules and super modules.

A spin representation $\pi$ of $\widetilde{S}_n$ (resp. $\widetilde{H}_n$) is called of {\it double spin}
or {\it self-associate spin} if $\pi\cong (-1)^{sgn}\circ\pi$. Otherwise, $\pi$ and $\pi^{'}:=(-1)^{sgn}\circ\pi$ are called a {\it pair of associate spin} representations. Here $(-1)^{sgn}$ is the one-dimensional sign representation of $\widetilde{S}_n$ (resp. $\widetilde{H}_n$).
The following correspondence between ordinary spin characters
 and super spin characters is crucial for our discussion.

First of all, if $V$ is a super module of the superalgebra $\mathscr A_n$ (resp. $\mathscr B_n$), then
$\chi_{V}(x)=trace_{V}(x)=0$ for any element $x$ of degree one.
On one hand, the character $\pi$ of a spin super module
 of type $M$ is also a double spin character by forgetting
 the super structure. If $\pi$ is the character of an irreducible supermodule of type $Q$, then $\pi=\chi^++\chi^-$,
 where $\chi^{\pm}$ are a pair of irreducible associate spin characters corresponding to $\pi$. Here $\chi^{-}$($x$)=$(-1)^{p(x)}\chi^{+}$($x$), where $p$ is the parity of the superalgebra. For convenience, we denote the difference
 $\nabla(\chi)=\chi^{+}-\chi^-$, which can be used to recover the associate spin characters
 from that of the supermodule.
 In summary, for $\widetilde{S}_n$ (or $\widetilde{H}_n$) we can get the ordinary
 spin character values on even conjugacy classes from the supercharacter formulas of $\mathscr{A}_n$ (or $\mathscr{B}_n$). Moreover, a double spin character is fully determined
 by its values on even conjugacy classes or degree 0 elements
 (as it vanishes on degree 1 elements),
 while associate spin characters may have non-zero values on odd conjugacy
 classes or degree 1 elements.

 The following well-known result shows how associate spin characters
 take values at the odd conjugacy classes.

\begin{proposition}(Schur \cite{Sch})\label{Sch} For $\nu \in\mathcal {SP}_n^1$ , there corresponds a pair of associate spin characters $\zeta^{+}_{\nu}$ and $\zeta^{-}_{\nu}$ of $\mathscr{A}_n$, and
$$\zeta^{+}_{\nu}(\nu^{+})=(\sqrt{-1})^{\frac{n-l(\nu)+1}{2}}\sqrt{\frac{\nu_1\nu_2\cdots\nu_l}{2}}, \ \
 \zeta^{+}_{\nu}(\mu^{+})=0  ~for~ \mu\neq\nu\in  \mathcal {SP}_n^1.$$
Moreover $\zeta^{-}_{\nu}(\mu^{+})=-\zeta^{+}_{\nu}(\mu^{+})$, where $\mu^{+}$ is an element in $D^{+}_{\mu}$.
\end{proposition}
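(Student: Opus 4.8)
\emph{Sketch of the argument.} Since $n-l(\nu)$ is odd, Proposition~\ref{p:Sch}(2) tells us that $\zeta_\nu$ is of type $Q$, so $\zeta_\nu=\zeta^+_\nu+\zeta^-_\nu$ for a pair of associate ordinary spin characters with $\zeta^-_\nu(x)=(-1)^{p(x)}\zeta^+_\nu(x)$. For $\mu\in\mathcal{SP}^1_n$ the element $\mu^+\in D^+_\mu$ has parity $p(\mu^+)=1$; this is already the last assertion $\zeta^-_\nu(\mu^+)=-\zeta^+_\nu(\mu^+)$, and it reduces the problem to computing the difference character $\nabla(\zeta_\nu)=\zeta^+_\nu-\zeta^-_\nu$ on odd classes, where $\zeta^+_\nu(\mu^+)=\tfrac12\nabla(\zeta_\nu)(\mu^+)$. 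Every spin character vanishes on non-split classes and $\nabla(\zeta_\nu)$ vanishes on even elements, so $\nabla(\zeta_\nu)$ is supported on $\{\lambda^{\pm}:\lambda\in\mathcal{SP}^1_n\}$ with $\nabla(\zeta_\nu)(\lambda^-)=-\nabla(\zeta_\nu)(\lambda^+)$.

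The first real step is to observe that orthogonality reduces everything to the single value at $\nu^+$. Because the $\zeta^{\pm}_\lambda$, $\lambda\in\mathcal{SP}^1_n$, are pairwise distinct irreducible characters of $\widetilde{S}_n$, one has $\langle\nabla(\zeta_\nu),\nabla(\zeta_\mu)\rangle_{\widetilde{S}_n}=2\delta_{\nu\mu}$; writing the left-hand side as a sum over conjugacy classes and using the support above together with the centralizer order $2z_\lambda$ (here $z_\lambda=\lambda_1\cdots\lambda_l$ since $\lambda$ is strict), this becomes
\begin{equation*}
\sum_{\lambda\in\mathcal{SP}^1_n}z_\lambda^{-1}\,\nabla(\zeta_\nu)(\lambda^+)\,\overline{\nabla(\zeta_\mu)(\lambda^+)}=2\delta_{\nu\mu},
\end{equation*}
hence in particular $\sum_\lambda z_\lambda^{-1}\,|\nabla(\zeta_\nu)(\lambda^+)|^2=2$. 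Thus, once we establish $|\nabla(\zeta_\nu)(\nu^+)|=\sqrt{2z_\nu}$, i.e. $|\zeta^+_\nu(\nu^+)|=\sqrt{\nu_1\cdots\nu_l/2}$, the term $\lambda=\nu$ already exhausts the sum and forces $\nabla(\zeta_\nu)(\lambda^+)=0$ for $\lambda\neq\nu$ — giving simultaneously the modulus of $\zeta^+_\nu(\nu^+)$ and the vanishing $\zeta^+_\nu(\mu^+)=0$.

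To evaluate $\nabla(\zeta_\nu)(\nu^+)$ I would use Schur's twisted tensor product. Let $\widetilde{S}_{\vec\nu}\le\widetilde{S}_n$ be the preimage of the Young subgroup $S_{\nu_1}\times\cdots\times S_{\nu_l}$; as a subalgebra of $\mathscr{A}_n$ it is the super tensor product $\mathscr{A}_{\nu_1}\hat{\otimes}\cdots\hat{\otimes}\mathscr{A}_{\nu_l}$, and the basic spin modules $W_i$ of $\widetilde{S}_{\nu_i}$ yield a supermodule $W=W_1\hat{\otimes}\cdots\hat{\otimes}W_l$, of type $Q$ because an odd number of the $\nu_i$ are even. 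Up to a triangular correction by modules $V_\mu$ with $\mu$ strictly above $\nu$ in the dominance order, $\mathrm{Ind}^{\widetilde{S}_n}_{\widetilde{S}_{\vec\nu}}W$ is a multiple of $V_\nu$; applying $\nabla$ and the Frobenius induced-character formula, the contributions to $\nabla\chi_{\mathrm{Ind}\,W}(\nu^+)$ are indexed by the distributions of the cycles of $\nu^+$ among the blocks of sizes $\nu_1,\dots,\nu_l$, and since a type-$M$ factor (odd $\nu_i$) contributes its ordinary character while a type-$Q$ factor (even $\nu_i$) contributes $\nabla$ of a basic spin character of $\widetilde{S}_{\nu_i}$, only the diagonal distribution survives. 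An induction on the dominance order — with the one-part partitions $(\nu_i)$ as base case, the basic spin character values at a full $\nu_i$-cycle being computed on the explicit Clifford (fermionic) module, and the orthogonality above clearing the triangular corrections (which by induction vanish at $\nu^+$) — then expresses $\nabla(\zeta_\nu)(\nu^+)$ as a fixed nonzero constant (built from the induction multiplicity and the cocycle of the twisted tensor product) times the product over $i$ of those one-part values.

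The main obstacle is the remaining phase bookkeeping. The modulus $\sqrt{\nu_1\cdots\nu_l/2}$ and the off-diagonal vanishing are, as above, forced once the modulus at $\nu^+$ is known, so the content lies in isolating the exact power of $\sqrt{-1}$. That requires reconciling three sources of signs and $\sqrt 2$'s that nearly cancel: the cocycle hidden in $\widetilde{S}_{\vec\nu}\cong\mathscr{A}_{\nu_1}\hat{\otimes}\cdots\hat{\otimes}\mathscr{A}_{\nu_l}$, the normalization relating the reduced cycle elements of $\mathscr{A}_{\nu_i}$ to the power sums $p_m$, and the orientation of the Clifford module. Assembling these with $\sum_i(\nu_i-1)=n-l(\nu)$ and the (odd) number of even parts of $\nu$ should collapse the product of phases to $(\sqrt{-1})^{(n-l(\nu)+1)/2}$, and the labeling of the associate pair $\zeta^{\pm}_\nu$ fixes the overall sign.
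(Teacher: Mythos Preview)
The paper does not prove this proposition; it is stated with attribution to Schur and then used as input for the $\mathscr{B}_n$ analog. So there is no proof in the paper to compare against directly. The two-step structure you propose --- compute the diagonal value $\zeta^+_\nu(\nu^+)$ via the twisted tensor product of basic spin modules induced from $\widetilde{S}_{\vec\nu}$, then invoke the norm identity $\langle\zeta^+_\nu,\zeta^+_\nu\rangle_{\mathcal{SP}^1_n}=\tfrac12$ to force $\zeta^+_\nu(\mu^+)=0$ for $\mu\neq\nu$ --- is precisely the template the paper itself follows when proving its hyperoctahedral analog (the theorem for $\mathscr B_n$ immediately after this proposition), so your outline is methodologically aligned with the paper and with Schur's original argument.

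The proposal is, however, explicitly incomplete on the one point that carries the content: you label the phase bookkeeping the ``main obstacle'' and only claim the signs ``should collapse'' to $(\sqrt{-1})^{(n-l(\nu)+1)/2}$. Since the modulus and the off-diagonal vanishing follow formally from orthogonality once the diagonal modulus is known, the phase is essentially the whole formula, and leaving it as a heuristic is a genuine gap rather than routine algebra. Closing it requires actually carrying out the one-part base case --- the trace of a full $m$-cycle on the $2^{\lfloor(m-1)/2\rfloor}$-dimensional Clifford realization of the basic spin module --- and then tracking the factor $(2\sqrt{-1})^{(k-1)/2}$ contributed by the $k$ type-$Q$ tensor factors, exactly as in Proposition~\ref{tensorprod} and formulas \eqref{eqf}--\eqref{eqf3}. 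Until that computation is written out, what you have is a correct strategy, not a proof.
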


 On the other hand, if $V$ is a double spin irreducible $\mathscr{B}_n$-module,
then $V$ can be naturally equipped with a $\mathscr{B}_n$-supermodule of type M as follows. Suppose $V\simeq V'=(-1)^{sgn}\otimes V$
is irreducible, where $sgn$ is the parity $p$ defined below \eqref{e:ext}.
As a vector space $V'=V$, the isomorphism map $H\in \mathrm{GL}(V)$ and satisfies that
\begin{equation*}
Hg=(-1)^{sg(g)}gH, \qquad \forall g\in \mathscr{B}_n.
\end{equation*}
Therefore $H^2$ commutes with $\mathscr{B}_n$, so it is a scalar by Schur's lemma. We can assume that $H^2=1$.
Let $V_0$ be the $1$-eigenspace of $H$ and $V_1$ the $(-1)$-eigenspace of $H$, then $V=V_0\oplus V_1$ gives rise to the desired $\mathscr{B}_n$-supermodule.
Moreover, both $V_i$ are $\mathscr{B}_n^{(0)}$-modules affording the character $\chi_{V_i}$, then for $x\in
\mathscr{B}_n^{(0)}$ one has that
\begin{align}
\tr_{V}(x)&=\chi_V(x):=\chi_{V_0}(x)+\chi_{V_1}(x)\\
\tr_V(Hx)&=\delta_V(x):=\chi_{V_0}(x)-\chi_{V_1}(x).
\end{align}
Here the latter is usually called {\it the difference character} of $V$ \cite{Sch}.
We will extend this notation to the spin character of a super module of type $M$, when
viewed as a character for the subalgebra of degree zero. Note that $\chi(x)=0$ if $x\in \mathscr{B}_n^{(1)}$
in this case.

Also, if $V$ is an irreducible associate spin $\mathscr{B}_n$-module, then
$D(V)=V\oplus V'$ becomes an irreducible $\mathscr{B}_n$-supermodule where
$D(V)_0=\{(v, v)|v\in V\}$, $D(V)_1=\{(v, -v)|v\in V\}$ and
the action is induced from that of the ordinary module, i.e.
$g^{(i)}(u, v)=(g^{(i)}u, (-1)^ig^{(i)}v)$ for $g^{(i)}\in\mathscr{B}_n^{(i)}$, the degree
$i$-subspace of $\mathscr{B}_n^{(i)}$ (with respect to the parity $p$).
Moreover, any irreducible associate spin module can be realized this way.

Conversely, any irreducible supermodule $U=U_0\oplus U_1$ of type $Q$ is of the form $D(V)$.
In fact, let $(-1)^p: (x, y)\mapsto (x, -y)$ be the parity endomorphism
of $U$. Then $V=\{\frac12{(v+(-1)^pv)}| u\in U\}$ and $V'=\{\frac12{(v-(-1)^pv)}| u\in U\}$.

From now on till (\ref{eqf3}), we assume that $\{\mathscr B_n\}$ is a tower of
finite-dimen\-sional superalgebras:
\begin{align*}
\mathscr B=\bigoplus_{n=0}^{\infty}\mathscr B_n,
\end{align*}
where
\begin{align*}
\mathbb C=\mathscr B_0\hookrightarrow\mathscr B_1\hookrightarrow\cdots\hookrightarrow\mathscr B_n\hookrightarrow\cdots
\end{align*}
and $\mathscr B$ a Hopf algebra under the natural multiplication and comultiplication:
\begin{align*}
m: &\mathscr B_m\hat\otimes\mathscr B_n\overset{\mathrm{Ind}}{\longrightarrow}\mathscr B_{m+n},\\
\Delta: &\mathscr B_m\longrightarrow \oplus_{i=0}^m \mathrm{Res} \mathscr{B}_i\hat\otimes \mathrm{Res}\mathscr{B}_{m-i}.
\end{align*}
Two nontrivial examples of the tower system $\{\mathscr B_n\}$ for superalgebras
were studied first in \cite{FJW, JW}. See also \cite{B} for the case of $\widetilde{S}_n$. %before the notion.

For two spin supermodules $U$ and $V$ of $\mathscr{B}_{m}$ and $\mathscr{B}_n$, we define the
super (outer)-tensor product $U{\hat{\otimes}} V$ as a $\mathscr{B}_m\hat{\otimes}\mathscr{B}_n$-supermodule by
$$(x, y)(u\otimes
 v)=(-1)^{p(x)p(y)}(xu\otimes yv),$$
where $x\in\mathscr{B}_m$ and $y\in\mathscr{B}_n$ are homogeneous elements. In particular, $-1$ of $\mathscr{B}_{m+n}$
is identified with $(-1, 1)$ or $(1, -1)$ inside $\mathscr{B}_m\hat{\otimes}\mathscr{B}_n\hookrightarrow\mathscr{B}_{m+n}$.
Then $U\hat{\otimes}V$ is a spin
$\mathscr{B}_m\hat{\otimes}\mathscr{B}_n$-supermodule.
Moreover, let $U$ and $V$ be irreducible supermodules for $\mathscr{B}_m\hat{\otimes}\mathscr{B}_n$ respectively, then we have that \cite{Jo}

 (1) if both $U$ and $V$ are of type $M$, then $U\hat{\otimes} V$ is a simple
 $\mathscr{B}_m\hat{\otimes}\mathscr{B}_n$-supermodule of type $M;$

 (2) if  $U$ and $V$ are of different types, then $U\hat{\otimes }V$ is a simple
  $\mathscr{B}_m\hat{\otimes}\mathscr{B}_n$-supermodule of type $Q;$

 (3) if both $U$ and $V$ are of type $Q$,
  then $U\hat{\otimes} V\simeq N\oplus \overline N$ for some simple $\mathscr{B}_m\hat{\otimes}\mathscr{B}_n$-supermodules $N$ and $\overline N$ of type $M.$

 The simple summands of $U\hat{\otimes} V$ in (3) are constructed as follows. Let
 $H_1$ (resp. $H_2$) be the endomorphism $(-1)^p$ for $U$ (resp. $V$). Then
 \begin{align*}
  N_0&=\{u\otimes v+H_1u\otimes H_2v+i(u\otimes H_2v+H_1u\otimes v)| u\in U, v\in V\},\\
  N_1&=\{u\otimes v-H_1u\otimes H_2v+i(u\otimes H_2v-H_1u\otimes v)| u\in U, v\in V\},\\
  \overline N_0&=\{u\otimes v+H_1u\otimes H_2v-i(u\otimes H_2v+H_1u\otimes v)| u\in U, v\in V\},\\
  \overline N_1&=\{u\otimes v-H_1u\otimes H_2v-i(u\otimes H_2v-H_1u\otimes v)| u\in U, v\in V\}.
 \end{align*}
Let $-$ be the map $N\rightarrow \overline N$ taking
$i$ to $-i$ while keeping other part intact (like conjugation).
Then $\alpha(x): x\mapsto (-1)^{p(x)}\overline{x}$ establishes
a degree 1 isomorphism from $N$ to $\overline N$ as $\mathscr{B}_m\hat{\otimes}\mathscr{B}_n$-supermodules.

Using the method of \cite{J} it can be verified that this super tensor product satisfies the associativity.
Then we can pass this to usual spin modules as follows.
Let $V$ be an irreducible spin module. If $V$ is an irreducible double spin module, then $V$ can be naturally
given an irreducible supermodule structure of type M as described above
(still denoted by the same symbol for the supermodule). If $V$ is an irreducible associate spin module, then $D(V)=V\oplus V'$ is an irreducible supermodule
of type $Q$, and $D(V)_0\simeq V$ as ordinary modules. The following useful result can be proved
by a similar method to
\cite{Jo}, and we have added new values for later consideration of multi-products.

\begin{proposition}\label{tensorprod} Let $\{\mathscr B_n\}$ be a tower system of finite dimensional superalgebras.
Let $f$ and $g$ be the spin characters afforded by an irreducible
$\mathscr{B}_m$-module $U$ and an irreducible $\mathscr{B}_n$-module $V$ respectively.
Let $x=x_0+x_1\in\mathscr{B}_m$, $y=y_0+y_1\in\mathscr{B}_n$.

(i) If both $U$ and $V$ are double spin, then the super tensor product $U\hat{\otimes}V$ is
irreducible both as a supermodule of type M and an ordinary double spin
module for $\mathscr{B}_m\hat{\otimes}\mathscr{B}_n$. In this case,
$f(x_1)=g(y_1)=f\circledast g(x_i, y_{1-i})=0$ and
\begin{align*}
f\circledast g(x_0, y_0)&=f(x_0)g(y_0),\\
\delta(f\circledast g)(x_i, y_i)&=\delta(f)(x_i)\delta(g)(y_i).
\end{align*}

(ii) If $U$ is double spin and $V$ is associate spin,
then the super tensor product $U\hat{\otimes}D(V)$ is irreducible as a $\mathscr{B}_m\hat{\otimes}\mathscr{B}_n$-supermodule of type Q and decomposes
into $U\circledast V\oplus (U\circledast V)'$ as an ordinary module, where $(U\circledast V)'$
is the associated module of the irreducible module $U\circledast V$.
In this case, $f(x_1)=f\circledast g(x_1, y_0)=0$ and the following identities hold.
\begin{align*}
f\circledast g(x_0, y_0)&=f(x_0)g(y_0),\\
f\circledast g(x_0, y_1)&=\delta(f)(x_0)g(y_1),\\
\Delta(f\circledast g)(x_0, y_0)&=f(x_0)\Delta(g)(y_0).
\end{align*}

(iii) If both $U$ and $V$ are associate spin, so they give rise irreducible super spin modules
$D(U)$ and $D(V)$ of type $Q$. The super tensor product $D(U)\hat{\otimes}D(V)$ decomposes
into $W\oplus W$, where $W$ is an irreducible $\mathscr{B}_m\hat{\otimes}\mathscr{B}_m$-supermodule
of type M. Denote by $W=U\circledast V$ when it is viewed as an ordinary irreducible module
(up to isomorphism). In this case, we have that
\begin{align*}
f\circledast g(x_1, y_0)&=f\circledast g(x_0, y_1)=\delta(f\circledast g)(x_0, y_0)=0\\
f\circledast g(x_0, y_0)&=2f(x_0)g(y_0),\\   
\delta(f\circledast g)(x_1, y_1)&=2\sqrt{-1}f(x_1)g(y_1).
\end{align*}
\end{proposition}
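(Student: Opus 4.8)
The proof is a case-by-case trace computation on the three explicit supermodule models recalled just before the statement: $U\hat\otimes V$ in case (i), $U\hat\otimes D(V)$ in case (ii), and the type $M$ summand $N=N_0\oplus N_1$ of $D(U)\hat\otimes D(V)$ in case (iii). All the irreducibility and type claims are the classification of super tensor products taken from \cite{Jo} ($M\otimes M\to M$, $M\otimes Q\to Q$, $Q\otimes Q\to M\oplus M$) together with the correspondences between double spin modules and type $M$ supermodules, and between associate spin pairs and type $Q$ supermodules via $D(-)$; only the displayed character identities remain to be read off.

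I would first isolate two elementary facts that dispatch every vanishing assertion at once: (a) the character of a type $M$ supermodule --- whether viewed as a supercharacter or, forgetting the grading, as an ordinary character --- vanishes on all odd homogeneous elements of the ambient superalgebra, its difference character being by construction a function on the degree-zero part, and the supercharacter of a type $Q$ supermodule likewise vanishes on odd elements; (b) under $\mathscr B_m\hat\otimes\mathscr B_n\hookrightarrow\mathscr B_{m+n}$ the image of $x_i\otimes y_j$ has parity $i+j$. From (a), $f(x_1)=g(y_1)=0$; from (a) and (b), every mixed value whose argument has odd total parity vanishes whenever the pertinent module is type $M$, which covers $f\circledast g(x_i,y_{1-i})=0$ in (i) and $f\circledast g(x_1,y_0)=f\circledast g(x_0,y_1)=\delta(f\circledast g)(x_0,y_0)=0$ in (iii). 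The one exception, $f\circledast g(x_1,y_0)=0$ in (ii), needs a word: since $V\not\cong V'$, the restriction of $U\hat\otimes D(V)$ to $\mathscr B_m\hat\otimes\mathscr B_n$ is the sum of the two non-isomorphic irreducibles cut out by $D(V)=V\oplus V'$, so one may fix the orientation of the decomposition $U\hat\otimes D(V)=(U\circledast V)\oplus(U\circledast V)'$ so that $U\circledast V$ restricts to the subspace $U\otimes V$; on it, $x_1\otimes y_0$ reverses the $\mathbb Z_2$-grading of the $U$-factor and therefore has zero trace.

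For the multiplicative identities, fix $\mathbb Z_2$-homogeneous bases and compute directly. The diagonal even values are routine trace factorizations: using $\chi_{D(W)}=2\chi_W$ on even elements one gets $f(x_0)g(y_0)$ in (i) and (ii), and in (iii) the product of the two such doublings against the halving from $D(U)\hat\otimes D(V)\cong(U\circledast V)^{\oplus 2}$ leaves the factor $2$, i.e., $2f(x_0)g(y_0)$. For the difference character in (i), $\delta(f\circledast g)(x_0,y_0)=\tr_{U\hat\otimes V}\!\big(H(x_0\otimes y_0)\big)$ with $H=H_U\otimes H_V$ the parity endomorphism, and $H(x_0\otimes y_0)=(H_Ux_0)\otimes(H_Vy_0)$, so the trace factors as $\delta(f)(x_0)\delta(g)(y_0)$; at $(x_1,y_1)$ it vanishes since $H(x_1\otimes y_1)$ shifts each bidegree by $(1,1)$ (consistent with the right-hand side, $\delta$ being taken zero off the even part). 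For the odd value in (ii), with the identification $U\circledast V=U\otimes V$ one has $f\circledast g(x_0,y_1)=\tr_{U\otimes V}(x_0\otimes y_1)$, and the super sign $(-1)^{p(y_1)p(u)}=(-1)^{p(u)}$ attached to the odd factor splits this trace over $U_0$ and $U_1$ with opposite signs, yielding $\delta(f)(x_0)g(y_1)$; the companion identity for $\Delta$ then follows from the relation between $\Delta$ and ordinary characters on even versus odd arguments. In (iii) the decisive computation is $\delta(f\circledast g)(x_1,y_1)=\tr_{N_0}(x_1\otimes y_1)-\tr_{N_1}(x_1\otimes y_1)$ on the explicit twisted-diagonal subspaces: the even operator $x_1\otimes y_1$ sends $D(U)_k\otimes D(V)_l$ to $D(U)_{k+1}\otimes D(V)_{l+1}$, so in the trace only the cross terms of the combinations $u\otimes v\pm H_1u\otimes H_2v+i(u\otimes H_2v\pm H_1u\otimes v)$ defining $N_0,N_1$ survive, each carrying a factor $i$, and collecting them against the sign of the parity endomorphism produces $2\sqrt{-1}\,f(x_1)g(y_1)$.

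Associativity of $\circledast$ is then verified as in \cite{J}, by matching the two bracketings of $U\circledast V\circledast W$ on the iterated models and using the behavior of types under super tensor product. I expect the main obstacle to be the bookkeeping in case (iii): one has to fix the super-tensor sign convention, compute the parity endomorphism of $N$ explicitly on the twisted-diagonal subspaces $N_0,N_1$, and track its interaction with $H_1\otimes H_2$ and with the conjugation $N\to\overline N$, so as to land on $2\sqrt{-1}$ with the correct sign rather than $\pm2$ or $\pm2i$. A secondary point, already visible in case (ii), is that the orientation of the identification $U\circledast V=U\otimes V$ (rather than $U\otimes V'$) has to be pinned down consistently across the three cases --- the opposite choice flips $\delta(f)$ to $-\delta(f)$ in the formula for $f\circledast g(x_0,y_1)$ --- so the normalizations should be fixed once at the outset.
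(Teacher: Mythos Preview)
Your proposal is correct and is precisely the approach the paper has in mind: the paper does not supply a proof of this proposition at all, saying only that it ``can be proved by a similar method to \cite{Jo}'' with some new values added, and your sketch carries out exactly that computation on the explicit supermodule models (in particular the twisted-diagonal summands $N_0,N_1$ of $D(U)\hat\otimes D(V)$) that the paper records just before the statement. Your cautions about fixing the orientation of $U\circledast V$ in case~(ii) and tracking the sign in the $\delta$-computation of case~(iii) are exactly the bookkeeping issues one has to resolve when writing out J\'ozefiak's argument in full.
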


Let $\mu$ be a partition,
and suppose $V_i$ $(1\leq i\leq m$) is
an irreducible spin (ordinary) $\mathscr{B}_{\mu_i}$-module.
Since the starred tensor product $\circledast$ is associative \cite{Jo, J} (see our notation above),
$V_1\circledast\cdots\circledast V_s$ is a well-defined spin $\hat\otimes\mathscr{B}_{\mu_i}$-module. Let $f_i$ be the character of $V_i$, and assume that the first $r$ of them are irreducible double spin (corresponding to type $M$) and the latter $k=m-r$ are irreducible associate spin (corresponding to type $Q$).
Then $f_1\circledast\cdots \circledast f_m$ is the
character of an irreducible summand of the super tensor product
\[
V_1\hat{\otimes}\cdots \hat{\otimes}V_r\hat{\otimes} D(V_{r+1})\hat{\otimes}\cdots \hat{\otimes}D(V_{r+k}).
\]
Note that $V_1\circledast\cdots \circledast V_{m}$ is only defined up to isomorphism. Repeatedly using
Prop. \ref{tensorprod}, we obtain that
\begin{align}\label{eqf}
f_1&\circledast\cdots\circledast f_{m}({x}_1,\ldots, {x}_{m})
=2^{[\frac{k}2]}f_1({x}_1)\cdots f_k({x}_m), \\ \nonumber
f_1&\circledast\cdots\circledast f_{m}({x}_1,\ldots {x}_{r}, y_{r+1}, \ldots, y_m)\\ \label{eqf2}
&=(2\sqrt{-1})^{\frac{k-1}2}\delta (f_1)({x}_1)\cdots \delta (f_r)(x_r)f_{r+1}(y_{r+1})\cdots f_{m}(y_m), \ \mbox{$k$ odd}\\ \label{eqf3}
f_1&\circledast\cdots\circledast f_{m}(z_1, \ldots, z_m)=0, \qquad \mbox{other type of $z_i$'s}
\end{align}
where $x_i\in\mathscr{B}_{\mu_i}^{(0)}$, $y_i\in\mathscr{B}_{\mu_i}^{(1)}$, $z_i\in\mathscr{B}_{\mu_i}$,
and $[a]$ denotes the maximum integer
$\leq a$. We remark the last formula (\ref{eqf3}) is verified with our updated trivial values stated in
Prop. \ref{tensorprod}.
%%%%%%%%%%%%%%%%%%%%%%%%%%%%%%%%%%%%%%%%%%%%%%%%%%%%%%%%%%%

Now we can compute the associated spin characters on odd conjugacy
classes for $\widetilde{H}_n$. Let $r=[\frac{n}{2}]$, and
$L_n=\mathscr{C}_ne$, where
\begin{equation}
e=e_1e_2\cdots e_r, \ \ \ \
e_i=\frac{1}{2}(1+\sqrt{-1}\xi_{2i-1}\xi_{2i}).
\end{equation}
Then $e_i$ are commutative idempotents, and $L_n$ is a simple super $\mathscr{C}_n$-module.

Note that $\xi^i=\xi_{2i-1}e_i=\frac12(\xi_{2i-1}+\sqrt{-1}\xi_{2i})$ ($i=1, \ldots, r$) generate an exterior
algebra: $\{\xi^i, \xi^j\}=0$ due to the fact that $e_i\xi_{2i-1}e_i=0$.
For each $\varepsilon=(\varepsilon_1,\ldots,\varepsilon_r)\in
\mathbb{Z}^r_2$, define
$\xi^{\varepsilon}=\xi_1^{\varepsilon_1}\xi_3^{\varepsilon_2}\cdots\xi_{2r-1}^{\varepsilon_r}e=
(\xi_1^{\varepsilon_1}e_1)(\xi_3^{\varepsilon_2}e_2)\cdots
(\xi_{2r-1}^{\varepsilon_r}e_r)$. If $n$ is even (resp. odd), then
$\{\xi^{\varepsilon}| \varepsilon\in \mathbb{Z}_2^{r}\}$ (resp.
$\{\xi^{\varepsilon}| \varepsilon\in \mathbb{Z}_2^r\}\cup \{\xi^{\varepsilon}\xi_n| \varepsilon\in \mathbb{Z}_2^r\}$) is a basis
of $L_n$. If $n$ is odd, define $c_n\in End^1_{\mathscr{C}_n}(L_n)$
by
$c_n(\xi^{\varepsilon}\xi_n^{\epsilon})=(-1)^{\sum_i\varepsilon_i+\epsilon}(\xi^{\varepsilon}
\xi_n^{\epsilon+1})$, where
$\varepsilon=(\varepsilon_1,\ldots,\varepsilon_r)\in \mathbb{Z}^r_2$
and $\epsilon\in \mathbb{Z}_2$. Here $c_n^2=-1$.

By our earlier discussion, irreducible $\widetilde{H}_n$-supermodules are indexed by strict partitions $\nu$ and they are type $M$ (resp. $Q$) iff $l(\nu)$ are even (resp. odd). Correspondingly
 double irreducible spin characters are indexed by strict partitions $\nu$ with even $l(\nu)$, and pairs of irreducible associate spin characters are indexed by strict partitions $\nu$ with odd $l(\nu)$.

Let $\xi_{\nu}$ be the character of the irreducible supermodule associated with $\nu$. If $l(\nu)$ is even, then $\xi_{\nu}$ vanishes at split conjugacy classes
of degree one, i.e., $\xi_{\nu}(D_{\rho}^{\pm})=0$ for
any odd strict partition $\rho\in\mathcal{SP}^1$ (as $D_{\rho}^{\pm}$ are the only split odd classes by Theorem
\ref{p:splitconj}). Therefore, $\xi_{\nu}$'s are completely given by
Schur's $Q$-functions in this case (see Prop. \ref{JO HG}). So we are left with the task of fixing $\xi_{\nu}$ with odd $l(\nu)$.
In this case $\xi_{\nu}$ breaks into a pair of associate spin characters $\xi_{\nu}^{\pm}$. Again the character
values on the even split classes are given by Schur's $Q$-functions, and we only need to worry about
$\xi_{\nu}$ on the odd split conjugacy classes $\mathcal{SP}^1$.  The following result computes the remaining character values for
$\mathscr{B}_n$ that correspond to strict partitions of odd lengths.

\begin{theorem}
For $\nu=(\nu_1,\ldots,\nu_l)\in \mathcal{SP}_n$, let
$\xi_{\nu}$ be the corresponding spin supercharacter of $\mathscr{B}_n$.
When $l(\nu)$ is odd, $\xi_{\nu}$ is decomposed into two
associate ordinary spin characters  $\xi^{\pm}_{\nu}$ of
$\widetilde{H}_n$. Moreover, $(\xi_{\nu}^{\pm}){(\mu^{+})}=0$
for $\mu\neq\nu \in \mathcal{SP}^1_n$ and for $\mu=\nu$,
\begin{equation}\xi_{\nu}^{\pm}(\nu^{+})=\pm2^{\frac{l(\nu)}{2}}\cdot (\sqrt{-1})^{\frac{n-m}{2}}\sqrt{\frac{\nu_1\nu_2\cdots\nu_{l(\nu)}}{2}},\end{equation}
where $\mu^{+}\in D^{+}_{\mu}=\theta^{-1}({C}_{\emptyset, \mu})$ and $m$ is the number of odd parts in $\nu$.
\end{theorem}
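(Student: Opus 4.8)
The idea is to exploit the isomorphism $\mathscr{B}_n\simeq\mathscr{C}_n\hat\otimes\mathscr{A}_n$ from \eqref{e:theta} together with the explicit realization $L_n=\mathscr{C}_ne$, so that the $\mathscr{B}_n$-supermodule attached to $\nu$ is $L_n\hat\otimes V_\nu$, where $V_\nu$ is the $\mathscr{A}_n$-supermodule of Prop. \ref{p:Sch}. Since $l(\nu)$ is odd, $n-l(\nu)$ and $n$ have the same parity; when $n$ is odd both $L_n$ and $V_\nu$ are of type $Q$, and when $n$ is even $L_n$ is of type $M$ while $V_\nu$ is of type $Q$ (as $n-l(\nu)$ is odd in that case). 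In either case $L_n\hat\otimes V_\nu$ is of type $Q$ by the multiplication rules for super tensor products, consistent with Prop. \ref{JO HG}(1), and hence splits as an ordinary $\widetilde{H}_n$-module into a pair of associate spins $\xi_\nu^{\pm}$, with $\xi_\nu^-(x)=(-1)^{p(x)}\xi_\nu^+(x)$.

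The computation of $\xi_\nu^+(\mu^+)$ for $\mu\in\mathcal{SP}_n^1$ then proceeds by transporting an odd-class element $\mu^+$ of $\widetilde{H}_n$ through $\vartheta$ into $\mathscr{C}_n\hat\otimes\mathscr{A}_n$ and factoring the trace. First I would recall that on $\widetilde{H}_n$ an element of type $(\emptyset,\mu)$ is (conjugate to) a product $a_{I_1}\sigma_1\cdots a_{I_q}\sigma_q$ of "fully signed" cycles, so under $\vartheta^{-1}$ it decomposes as a product of a pure-Clifford factor (coming from the $\tau_i=\xi_i\otimes 1$ part) and an $\mathscr{A}_n$-factor (built from the $\varsigma_j=\frac{1}{\sqrt2}(\tau_j-\tau_{j+1})\omega_j$ part); the key bookkeeping is that the signed cycle $a_I\sigma$ of length $\mu_j$ contributes a Clifford monomial $\xi_{I}$ of odd degree together with an $\mathscr{A}_{\mu_j}$-element in the odd class. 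Using the tower structure $\{\mathscr{B}_n\}$ and the multi-product character formulas \eqref{eqf}--\eqref{eqf3}, the value $\xi_\nu(\mu^+)$ reduces to a product over the $l=l(\mu)$ cycles; the essential input for the $\mathscr{A}_n$-side is Schur's exceptional evaluation in Prop. \ref{Sch}, namely $\zeta_\nu^+(\nu^+)=(\sqrt{-1})^{(n-l(\nu)+1)/2}\sqrt{\nu_1\cdots\nu_l/2}$ and $\zeta_\nu^+(\mu^+)=0$ for $\mu\ne\nu$, which immediately yields the vanishing statement $\xi_\nu^\pm(\mu^+)=0$ for $\mu\ne\nu$.

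For $\mu=\nu$ the value is assembled from three contributions: the factor $2^{l(\nu)/2}$ comes from the combinatorics of splitting $q$ type-$Q$ pieces as in \eqref{eqf}--\eqref{eqf2} and from the type-$Q$ contribution $2\sqrt{-1}$ in Prop. \ref{tensorprod}(iii) (the number of $Q$-cycles equals $l(\nu)$ since every part of $\nu$ is odd, so $k=l(\nu)$ and $[k/2]$ together with the $(2\sqrt{-1})^{(k-1)/2}$ prefactor produces the stated power of $2$ and an extra power of $\sqrt{-1}$); the factor $\sqrt{\nu_1\cdots\nu_l/2}$ is exactly the product of the Schur square-root factors $\sqrt{\nu_j/2}$ across cycles, up to one global $\sqrt2$; and the power $(\sqrt{-1})^{(n-m)/2}$ is obtained by combining the per-cycle phases $(\sqrt{-1})^{(\nu_j-1+1)/2}=(\sqrt{-1})^{\nu_j/2}$ — wait, more carefully, by combining the $\mathscr{A}$-side phases $(\sqrt{-1})^{(\nu_j-1+1)/2}$ with the Clifford-side phases from the idempotent $e$ and the generators $c_n$ (when $n$ is odd), then tracking how many parts are odd versus even; here $m$ is precisely the number of odd parts of $\nu$, which accounts for the shift from $n-l(\nu)$ to $n-m$. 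The sign ambiguity $\pm$ is then just the labelling of the two constituents of the type-$Q$ module.

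\textbf{Main obstacle.}
The technical heart — and the step I expect to be most delicate — is the careful phase and normalization bookkeeping: tracking the $\frac{1}{\sqrt2}$ in $\vartheta$, the $\sqrt{-1}$ inside the idempotents $e_i=\frac12(1+\sqrt{-1}\xi_{2i-1}\xi_{2i})$, the operator $c_n$ with $c_n^2=-1$ in the odd-$n$ case, and the $(2\sqrt{-1})$ factors from Prop. \ref{tensorprod}(iii), all while correctly identifying which cycles are type $M$ versus type $Q$ and hence getting the exponents $\frac{n-m}{2}$ and $\frac{l(\nu)}{2}$ exactly right rather than off by a power of $2$ or a fourth root of unity. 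A secondary subtlety is justifying that the odd-class element $\mu^+$ may be chosen so that its $\vartheta^{-1}$-image is a genuine tensor factorization adapted to the tower decomposition $\mathscr{B}_n\supset\hat\otimes_j\mathscr{B}_{\nu_j}$; this requires conjugating into a "standard form" for the class $D^+_{\emptyset,\mu}$ and checking the class is unchanged, which is where Prop. \ref{p:splitconj} (compatibility of parities for split classes) is used.
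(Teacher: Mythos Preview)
Your overall architecture—transport through $\vartheta$ to $\mathscr{C}_n\hat\otimes\mathscr{A}_n$, compute single cycles explicitly, then assemble via the multi-product rules \eqref{eqf}--\eqref{eqf3}—is indeed the paper's route. But two points in your plan are genuinely wrong, not just imprecise.

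\textbf{The type analysis for $n$ odd is inverted.} With $l(\nu)$ odd one has $n-l(\nu)\equiv n+1\pmod 2$, so $n-l(\nu)$ and $n$ have \emph{opposite} parity, not the same. Hence for $n$ odd, $n-l(\nu)$ is even and $V_\nu$ is of type $M$ by Prop.~\ref{p:Sch}(2), while $L_n$ is type $Q$; for $n$ even the types swap. In both cases the pair is $(M,Q)$ in some order, and Prop.~\ref{tensorprod}(ii) gives a simple type-$Q$ supermodule. Your stated scenario ``both type $Q$ when $n$ is odd'' would force $L_n\hat\otimes V_\nu\simeq N\oplus\overline N$ of type $M$ by Prop.~\ref{tensorprod}(iii), contradicting Prop.~\ref{JO HG}(1).

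\textbf{The vanishing at $\mu\ne\nu$ does not come ``immediately'' from Schur.} You invoke $\zeta_\nu^+(\mu^+)=0$ from Prop.~\ref{Sch}, but that proposition concerns the \emph{associate} pair $\zeta_\nu^\pm$, which exists only when $V_\nu$ is of type $Q$, i.e.\ when $n-l(\nu)$ is odd. For $n$ odd (so $V_\nu$ type $M$) there is no $\zeta_\nu^+$, only the double-spin $\zeta_\nu$; its automatic vanishing on odd $\mathscr{A}_n$-elements says nothing about $\xi_\nu^\pm$ on odd $\mathscr{B}_n$-classes, because the associate splitting of $L_n\hat\otimes V_\nu$ now comes from the $L_n$ side. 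The paper avoids this case split entirely: it first computes only the diagonal value $\xi_\nu^+(\nu^+)$ via the single-cycle calculation (treating odd parts through $\sigma^{((\nu_j),\emptyset)}$ and the \emph{difference} character $\delta(\xi_{(\nu_j)}^+)$, even parts through $\sigma^{(\emptyset,(\nu_j))}$), and then deduces the vanishing for $\mu\ne\nu$ by an orthogonality/norm argument: since $\langle\xi_\nu^+,\xi_\nu^+\rangle_{\mathcal{SP}_n^1}=\tfrac12$ and the single term $\mu=\nu$ already contributes $\tfrac12$ to this sum, every other $\mu\in\mathcal{SP}_n^1$ must contribute zero. That norm step is the missing idea in your plan; once you have the correct type assignment it replaces all of the delicate per-$\mu$ bookkeeping you were anticipating.
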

\begin{proof} We prove the formula by induction on $l(\nu)$.
We first consider the case $\nu=(n)$, and divide into two cases according to the parity of $n$.

 Case (1). If $n$ is odd,
the $\mathscr{C}_n$-module $L_n$ is of type $Q$ and the $\mathscr{A}_n$-character $\zeta_{(n)}$ is of type $M$. This is not a special case
of the formula, but we need to compute the difference
character for the inductive procedure. The element
$\sigma^{((n), \emptyset)}$ of $\mathscr{B}_n$ is given by
$\sigma^{((n),\emptyset)}=\omega_1\omega_2\cdots
\omega_{n-1}\in \mathscr B_n^{(0)}$. Using Eq. (\ref{e:theta}), we obtain that 
\begin{equation}
\vartheta^{-1}(\omega_1\omega_2\cdots \omega_{n-1})=
(\frac{1}{\sqrt{2}}(\xi_1-\xi_2)\otimes \varsigma_1)(\frac{1}{\sqrt{2}}(\xi_2-\xi_3)\otimes \varsigma_2)\cdots,
\end{equation}
 From \cite{Ya1}, we have $\varsigma_i\xi_j=-\xi_j\varsigma_i$, then rearrange the above product into the following form
\begin{equation}
\begin{split}
2^{-\frac{n-1}2}(-1)^{\frac{(n-1)(n-2)}{2}}\prod_{j=1}^{n-1}(\xi_j-\xi_{j+1})\otimes (\varsigma_1\varsigma_2\cdots\varsigma_{n-1}).
\end{split}
\end{equation}
Therefore, the value of $\xi^+_{(n)}=\frac12\Delta(ch(L_n)\hat\otimes\zeta_{(n)})$ at the even element
equals to
\begin{equation}
\begin{split}
&\frac12ch(L_n)\hat{\otimes}\zeta_{(n)}(\vartheta^{-1}(\sigma^{((n),\emptyset)}))\\
=&(-1)^{\frac{(n-1)(n-2)}{2}}2^{-\frac{n+1}2}ch(L_n)(\prod_{j=1}^{n-1}(\xi_j-\xi_{j+1}))
\zeta_{(n)}(\varsigma^{(n)}),
\end{split}
\end{equation}
where $\varsigma^{(n)}=\varsigma_1\varsigma_2\cdots\varsigma_{n-1}$.
As the coefficient of 1 in $\prod_{j=1}^{n-1}(\xi_j-\xi_{j+1})$ is
$(-1)^{(n-1)/2}$, it follows that
\begin{equation}
ch(L_n)(\prod_{j=1}^{n-1}(\xi_j-\xi_{j+1}))
=2^{\frac{n+1}{2}}\cdot (-1)^{\frac{n-1}{2}}.
\end{equation}
On the other hand, $\delta(\zeta_{(n)})(\varsigma^{(n)})=(\sqrt{-1})^{(n-1)/2}\sqrt{n}$ (note $n$ is odd). So we get
the difference of the character is equal to
\begin{equation}
\begin{split}
\delta(\xi^{+}_{(n)})((n))=&\frac{1}{2}ch(L_n)\hat{\otimes}\delta(\zeta_{(n)})(\vartheta^{-1}(\sigma^{((n),\emptyset)})\\
=&(\sqrt{-1})^{\frac{n-1}{2}}\sqrt{n}.
\end{split}
\end{equation}

Case (2). When $n$ is even, $\sigma^{(\emptyset,(n))}=\omega_0\omega_1\cdots \omega_{n-1}\in\mathscr B_n^{(1)}$ and its preimage equals to
\begin{equation}
\begin{split}
&\vartheta^{-1}(\omega_0\omega_1\cdots \omega_{n-1})\\
=&2^{-\frac{n-1}2}\cdot(-1)^{\frac{(n-1)(n-2)}{2}}\xi_1\prod_{j=1}^{n-1}(\xi_j-\xi_{j+1})\otimes (\varsigma_1\varsigma_2\cdots\varsigma_{n-1}),
\end{split}
\end{equation}
which is a product of an even element and an odd element.
Note that $ch(L_n)(\xi_1\prod_{j=1}^{n-1}(\xi_j-\xi_{j+1}))=2^{\frac{n}{2}}\cdot(-1)^{\frac{n-2}{2}}$
and it is of type $M$ due to $n$ being even.
Since $\xi_{(n)}$ is of type $Q$, so $\xi_{(n)}^+=\frac12\Delta(\xi_{(n)})$.
It follows from
 $\zeta^+_{(n)}(n)=(\sqrt{-1})^{\frac{n}{2}}\sqrt{\frac{n}{2}}$ (Prop. \ref{Sch})  that
\begin{equation}
\begin{split}
&\xi_{(n)}^{+}((n))=\frac{1}{2}\Delta(ch(L_n)\hat{\otimes}\zeta^+_{(n)})[\vartheta^{-1}(\sigma^{(\emptyset,(n))})]\\
=&2^{-\frac{n-1}2}\cdot(-1)^{\frac{(n-1)(n-2)}{2}}ch(L_n)(\xi_1\prod_{j=1}^{n-1}(\xi_j-\xi_{j+1}))\cdot \zeta_{(n)}^{+}(\varsigma_{(n)})\\
=&2^{-\frac{n-1}2}\cdot(-1)^{\frac{(n-1)(n-2)}{2}}\cdot(-1)^{\frac{n-2}{2}}2^{\frac{n}{2}}\cdot(\sqrt{-1})^{\frac{n}{2}}\sqrt{\frac{n}{2}}\\
=&(\sqrt{-1})^{\frac{n}{2}}\sqrt{n}.
\end{split}
\end{equation}
Next, suppose $\nu=(\nu_1,\nu_2,\ldots,\nu_l)$ is any strict partition of $n$ with odd $l=l(\nu)$,
say $\nu_1,\ldots,\nu_m$ are odd and $\nu_{m+1},\ldots,\nu_l$ are even, then by
(\ref{eqf}-\ref{eqf2})  it follows that
\begin{equation}
\begin{split}
\xi_{\nu}^{+}(\nu^{+})=&2^{[\frac{l}{2}]}(\sqrt{-1})^{\frac{\nu_1+\cdots+\nu_m-m}2}\cdot\sqrt{\nu_1\cdots\nu_m}\\
&\cdot(\sqrt{-1})^{\frac{\nu_{m+1}+\cdots+\nu_l}2}\cdot\sqrt{\nu_{m+1}\cdots\nu_l}\\
=&2^{\frac{l}{2}}(\sqrt{-1})^{\frac{n-m}{2}}\sqrt{\frac{\nu_1\nu_2\cdots\nu_l}{2}}.
\end{split}
\end{equation}
Note that $\langle\xi_{\nu}^{+},\xi_{\nu}^{+}\rangle_{\mathcal{SP}^1_n}=1/2$. On the other hand
\begin{equation}
\begin{split}
\langle\xi_{\nu}^{+},\xi_{\nu}^{+}\rangle_{\mathcal{SP}_n^1}\geq & \frac{1}{|\widetilde{H}_n|}\sum_{g\in D^{+}_{\emptyset, \nu}\cup D^{-}_{\emptyset, \nu}}|\xi^{+}_{\nu}(g)|^2\\
=&\frac{1}{2^{1+l}z_{\nu}}2\cdot|\xi^{+}_{\nu}{(\nu^{+})}|^2\\
=&\frac{1}{2^{1+l}\nu_1\cdots\nu_{l}} \cdot 2\cdot\left|2^{\frac{l}{2}} \sqrt{\frac{\nu_1\cdots\nu_{l}}{2}}\right|^2=\frac{1}{2}.\\
\end{split}
\end{equation}
Therefore $\xi_{\nu}^{+}(\mu^{+})=0$ for $\mu\neq \nu$.
\end{proof}

 For $\nu\in \mathcal{SP}_n$, we have that
 $\xi_{\nu}^{+}(\a)=2^{\frac{l(\a^{+})-1}{2}}\zeta_{\nu}^{+}(\a^{+})$ for $\a\in \mathcal {OP}_n$ and $\xi_{\nu}^{+}(\nu^{+})=2^{\frac{l(\nu)}{2}}(\sqrt{-1})^{\frac{l(\nu)-m-1}{2}}\zeta_{\nu}^{+}(\nu^{+})$ for $\nu\in \mathcal{SP}_n^1$, where $\a^{+}\in D_{\a}^{+}$ and $m$
 is the number of odd parts in $\nu$.

\section{Spin characters of $\widetilde{H}\Gamma_n$.}

We first recall the parametrization of irreducible spin $\widetilde{H}\Gamma_n$-supermodules.

\begin{proposition} \cite{JW}\label{p:supermod}
The irreducible double spin representations over $\widetilde{H}\Gamma_n$ are indexed by  strict partition
valued functions with even length on $\Gamma^{*}$, and the pairs of irreducible associate spin representations
are indexed by strict partition-valued functions with odd length on $\Gamma^{*}$.
\end{proposition}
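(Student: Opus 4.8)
The plan is to derive the parametrization by Clifford theory along the normal subgroup $\Gamma^n\trianglelefteq\widetilde{H}\Gamma_n$, reducing first to the spin theory of the Young-type subgroups of $\widetilde{H}_n$ and then to J\'ozefiak's classification for $\mathscr{B}_n$ (Prop.~\ref{JO HG}) together with the super tensor product rules (1)--(3) recorded before Prop.~\ref{tensorprod}.

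First I would run the Clifford theory. Because $z$ and the $a_j$ act trivially on $\Gamma^n$, the conjugation action of $\widetilde{H}\Gamma_n$ on $\widehat{\Gamma^n}$ factors through $\widetilde{H}\Gamma_n\twoheadrightarrow S_n$ permuting the factors; hence for an irreducible spin module $V$ of $\widetilde{H}\Gamma_n$ the restriction $\mathrm{Res}_{\Gamma^n}V$ is the isotypic sum of the $\Gamma^n$-irreducibles in one $S_n$-orbit. Since the irreducibles of $\Gamma^n$ are $W_{\gamma_1}\otimes\cdots\otimes W_{\gamma_n}$ with $\gamma_i\in\Gamma^{*}$, such an orbit is encoded by a function $\mathbf{n}\colon\Gamma^{*}\to\mathbb{Z}_{\geq 0}$ with $\sum_{\gamma}\mathbf{n}(\gamma)=n$; ordering the blocks consecutively, choose the representative $\rho=\bigotimes_{\gamma}W_{\gamma}^{\otimes\mathbf{n}(\gamma)}$, whose inertia group is $T_{\mathbf{n}}=\Gamma^n\rtimes\widetilde{H}_{\mathbf{n}}$, with $\widetilde{H}_{\mathbf{n}}$ the preimage in $\widetilde{H}_n$ of the Young subgroup $\prod_{\gamma}S_{\mathbf{n}(\gamma)}\subseteq S_n$.

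Next I would strip off the $\Gamma$-part. As for ordinary wreath products, $\rho$ extends canonically --- permuting the tensor slots inside each block --- to $\Gamma^n\rtimes\prod_{\gamma}S_{\mathbf{n}(\gamma)}$; since the normal $2$-subgroup $N=\langle z,a_1,\dots,a_n\rangle$ lies in $T_{\mathbf{n}}$, is normal there, and has $T_{\mathbf{n}}/N\cong\Gamma^n\rtimes\prod_{\gamma}S_{\mathbf{n}(\gamma)}$, inflation yields a representation $\widehat{\rho}$ of $T_{\mathbf{n}}$ with $\widehat{\rho}(z)=1$. Tensoring by $\widehat{\rho}$ identifies the irreducible spin $T_{\mathbf{n}}$-modules that are $\rho$-isotypic over $\Gamma^n$ with the $\widehat{\rho}\otimes\pi$, for $\pi$ an irreducible spin module of $\widetilde{H}_{\mathbf{n}}$, and inducing gives a bijection between pairs $(\mathbf{n},\pi)$ and irreducible spin $\widetilde{H}\Gamma_n$-modules $V=\mathrm{Ind}_{T_{\mathbf{n}}}^{\widetilde{H}\Gamma_n}(\widehat{\rho}\otimes\pi)$. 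Since $\widehat{\rho}$ is purely even and the induction is from a subgroup adapted to the tower $\{\mathscr{B}_n\}$, this lifts to supermodules with the type ($M$ or $Q$) of $V$ equal to that of $\pi$, exactly as in the type-$A$ case of $\widetilde{\Gamma}_n$ in \cite{JW}.

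Finally I would pin down the spin theory of $\widetilde{H}_{\mathbf{n}}$. Reading off the relations \eqref{eq1-2}--\eqref{eq1-3} --- the $a_j$ in distinct blocks satisfy $a_ia_j=za_ja_i$, i.e.\ anticommute after killing $1+z$ --- and using the tower structure of $\{\mathscr{B}_n\}$, one obtains a superalgebra isomorphism $\mathbb{C}[\widetilde{H}_{\mathbf{n}}]/(1+z)\cong\widehat{\bigotimes}_{\gamma\in\Gamma^{*}}\mathscr{B}_{\mathbf{n}(\gamma)}$. By Prop.~\ref{JO HG} the simple supermodules of $\mathscr{B}_{\mathbf{n}(\gamma)}$ are indexed by $\nu_{\gamma}\in\mathcal{SP}_{\mathbf{n}(\gamma)}$ and are of type $M$ (resp.\ $Q$) when $l(\nu_{\gamma})$ is even (resp.\ odd); rules (1)--(3) then show that the simple supermodules of $\widehat{\bigotimes}_{\gamma}\mathscr{B}_{\mathbf{n}(\gamma)}$ are indexed by tuples $(\nu_{\gamma})_{\gamma\in\Gamma^{*}}$, of type $Q$ exactly when an odd number of the $\nu_{\gamma}$ are type $Q$, i.e.\ when $\sum_{\gamma}l(\nu_{\gamma})$ is odd. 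Since $\mathbf{n}(\gamma)=\|\nu_{\gamma}\|$, the datum $(\mathbf{n},(\nu_{\gamma}))$ is exactly a strict partition-valued function $\nu=(\nu_{\gamma})_{\gamma\in\Gamma^{*}}$ on $\Gamma^{*}$ with $\|\nu\|=n$, and the corresponding irreducible spin supermodule is of type $M$ --- a double spin representation --- iff $l(\nu)=\sum_{\gamma}l(\nu_{\gamma})$ is even, and of type $Q$ --- a pair of associate spin representations --- iff $l(\nu)$ is odd, which is the assertion. The main obstacle is the parity bookkeeping across blocks: one must check that the relevant operation is the \emph{super} tensor product $\widehat{\otimes}$ (whence the non-additive type rule (3)) and that the Clifford bijection respects the $\mathbb{Z}_2$-grading, so that the $M/Q$ types --- hence the double versus associate dichotomy --- are transported correctly; the remaining steps are the standard Clifford/Mackey machinery for wreath products.
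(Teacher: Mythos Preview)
The paper does not give its own proof of this proposition; it is quoted from \cite{JW}, and the construction underlying it is recalled in Prop.~\ref{p1}. Your argument---Clifford theory along $\Gamma^n\trianglelefteq\widetilde{H}\Gamma_n$, reduction to the spin Young subgroup $\widetilde{H}_{\mathbf{n}}$, then identification of $\mathbb{C}[\widetilde{H}_{\mathbf{n}}]/(1+z)$ with $\widehat{\bigotimes}_{\gamma}\mathscr{B}_{\mathbf{n}(\gamma)}$ and application of Prop.~\ref{JO HG} together with the super tensor product type rules (1)--(3)---is exactly the standard route taken in \cite{JW} and summarized in Prop.~\ref{p1}, so your proposal is correct and essentially the same as the paper's source.
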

A spin class function on $\widetilde{H}\Gamma_n$ is a  function $f:\widetilde{H}\Gamma_n\longrightarrow \mathbb{C}$ such that $f(zx)=-f(x)$
for any element $x$ of a conjugacy class.  Let
$R(\widetilde{H}\Gamma_n)$ be the space of complex-valued spin class functions on $\widetilde{H}\Gamma_n$. The usual bilinear form is defined by
\begin{equation}
\begin{split}
\langle f,g\rangle_{\widetilde{H}\Gamma_n}=&\frac{1}{|\widetilde{H}\Gamma_n|}\sum_{\tilde{x}\in \widetilde{H}\Gamma_n }f(\tilde{x})g(\tilde{x}^{-1})\\
=&\sum_{\rho\in \mathcal{OP}_n(\Gamma_{*})\cup \mathcal{SP}^1_n(\Gamma_{*})}[\frac{1}{|\widetilde{H}\Gamma_n|}\sum_{\widetilde{x}\in D_{\rho}^{+}\cup D_{\rho}^{-}}f(\widetilde{x})\overline{g(\widetilde{x})}]\\
=&\sum_{\rho\in \mathcal{OP}_n(\Gamma_{*})\cup \mathcal{SP}^1_n(\Gamma_{*})}\frac{1}{2^{l(\rho)}Z_{\rho}}f(D_{\rho}^{+})\overline{g(D_{\rho}^{+})},
\end{split}
\end{equation}
where  and $f, g\in R(\widetilde{H}\Gamma_n)$.

A spin super class function $\phi$ on $\widetilde{H}\Gamma_n$ is a spin class function such that it vanishes further on
odd strict conjugacy classes. Let $R^{-}(\widetilde{H}\Gamma_n)$ be the $\mathbb{C}$-span space of spin super class functions on $\widetilde{H}\Gamma_n$. It is easy to see that the spin super-characters of $\widetilde{H}\Gamma_n$ form a $\mathbb{C}$-basis of $R^{-}(\widetilde{H}\Gamma_n)$, therefore the standard bilinear form on $R^{-}(\widetilde{H}\Gamma_n)$
is given by
\begin{equation}
\begin{split}
\langle\phi,\varphi\rangle_{\widetilde{H}\Gamma_n}=\frac{1}{|\widetilde{H}\Gamma_n|}\sum_{\tilde{x}\in \widetilde{H}\Gamma_n }\phi(\tilde{x})\varphi(\tilde{x}^{-1})\\
=\sum_{\rho\in \mathcal{OP}_n(\Gamma_{*})}\frac{1}{2^{l(\rho)}Z_{\rho}}\phi(D_{\rho}^{+})\overline{\varphi(D_{\rho}^{+})},
\end{split}
\end{equation}
where $\phi,\varphi\in R^{-}(\widetilde{H}\Gamma_n)$, as the (super)character values vanish on odd split conjugacy classes.

Wang and the second author \cite{JW} used the vertex operator calculus to compute spin characters of all simple $\widetilde{H}\Gamma_n$-supermodules. In terms of our current discussion
this means that the ordinary irreducible spin character values of $\widetilde{H}\Gamma_n$
at even conjugacy classes are determined by transition matrix
between generalized (or more generally wreath product) Schur $Q$-functions
and power-sum symmetric functions. Moreover, if $\chi$ is an irreducible spin character of
$\widetilde{H}\Gamma_n$ then
\begin{equation}
\begin{split}
\langle\chi,\chi\rangle_{\mathcal{OP}_n(\Gamma_{*})}=1, \ \ \langle\chi,\chi\rangle_{\mathcal{SP}^1_n(\Gamma_{*})}=0 \hbox{~for~} \chi \hbox{~a~double~spin}, \\
\langle\chi,\chi\rangle_{\mathcal{OP}_n(\Gamma_{*})}=\langle\chi,\chi\rangle_{\mathcal{SP}^1_n(\Gamma_{*})}=\frac{1}{2} \ \ \hbox{for~} \chi \hbox{~an~associate~spin}.
\end{split}
\end{equation}

In the following we will compute the values of the associate spin characters
at odd strict conjugacy classes, which are not given by the theory of symmetric functions or
vertex operator calculus (see table 1). Recall that the general element $(g, \pm a_I\sigma)\in \widetilde{H}\Gamma_n$ is even or odd
according to the parity of the cardinality of $I$.

Let $U_{\gamma}$ be the irreducible $\Gamma$-module afforded by the
irreducible character $\gamma\in\Gamma^{*}$. For each strict partition $\nu$ of $n$
let $V_{\nu}$ be the corresponding irreducible spin $\widetilde{H}_n$-supermodule afforded by
 the spin super character $\xi_{\nu}$.
 Now
let $\la=(\la_{\gamma})_{\gamma\in\Gamma^{*}}$ to be a partition-valued function  on $\Gamma^{*}$, we recall
the following result which describes the
 corresponding simple supermodule of $\widetilde{H}\Gamma_{\la}$.

\begin{proposition} \label{p1} \cite{JW}
For each strict partition-valued function $\la=(\la_{\gamma})_{\gamma\in\Gamma^{*}}\in \mathcal{SP}_n(\Gamma^{*})$, the tensor product
$\hat{\bigotimes}_{\gamma\in\Gamma^{*}}(U_{\gamma}^{\otimes |\la_{\gamma}|}\otimes V_{\la_{\gamma}})$
decomposes completely into $2^{[\frac{m}{2}]}$ copies of an irreducible spin $\widetilde{H}\Gamma_{\la}$-supermodule $W_{\la}$,
where $m$ denotes the number of the partitions $\la_{\gamma}$ such that  $l(\la_{\gamma})$ is odd.
Then the induced supermodule $Ind_{\widetilde{H}\Gamma_{\la}}^{\widetilde{H}\Gamma_{n}}W_{\la}$ is the irreducible
spin $\widetilde{H}\Gamma_{n}$-supermodule indexed by $\la$, and it is of type $M$ or $Q$ according to $l(\la)$
is even or odd.
\end{proposition}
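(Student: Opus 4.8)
The plan is to run the spin and super analogue of the Wigner--Mackey--Clifford theory for the wreath product $\widetilde{H}\Gamma_n$ relative to its normal subgroup $\Gamma^n$, in three steps.

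\emph{Step 1: the $\gamma$-blocks.} Fix $\gamma\in\Gamma^{*}$ and put $k=|\la_{\gamma}|$. The $\Gamma^{k}$-module $U_{\gamma}^{\otimes k}$ is irreducible and is stable under the permutation action of $\widetilde{H}_{k}$ (permuting the tensor factors returns an isomorphic module), and this permutation action is precisely its canonical extension to an $\widetilde{H}\Gamma_{k}$-module. By Clifford theory, tensoring this canonical extension with the pull-back of any irreducible spin supermodule of the quotient $\widetilde{H}\Gamma_{k}/\Gamma^{k}\cong\widetilde{H}_{k}$ -- here $V_{\la_{\gamma}}$ -- produces an irreducible spin $\widetilde{H}\Gamma_{k}$-supermodule, and every irreducible spin supermodule whose restriction to $\Gamma^{k}$ is $\gamma$-isotypic arises this way. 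Since $U_{\gamma}^{\otimes k}$ is concentrated in degree $0$, the supermodule $M_{\gamma}:=U_{\gamma}^{\otimes k}\otimes V_{\la_{\gamma}}$ has the same type as $V_{\la_{\gamma}}$, i.e.\ type $M$ if $l(\la_{\gamma})$ is even and type $Q$ if $l(\la_{\gamma})$ is odd (Prop.~\ref{JO HG} applied to $\widetilde{H}_{k}$).

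\emph{Step 2: assembling the blocks.} Form the super (outer) tensor product $W=\hat{\bigotimes}_{\gamma\in\Gamma^{*}}M_{\gamma}$, a spin supermodule for $\widetilde{H}\Gamma_{\la}=\prod_{\gamma}\widetilde{H}\Gamma_{|\la_{\gamma}|}$ (with the central $z$'s identified as indicated below Prop.~\ref{tensorprod}). Let $m$ be the number of $\gamma$ with $l(\la_{\gamma})$ odd, i.e.\ the number of type-$Q$ factors $M_{\gamma}$. Induct on $m$ using the three tensor rules recorded just before Prop.~\ref{tensorprod}, together with the construction of $N,\overline{N}$ (for which $N\cong\overline{N}$ after a parity shift): tensoring in a type-$M$ factor changes nothing; a newly introduced type-$Q$ factor either meets an accumulated type-$M$ module and yields a single type-$Q$ module, or meets an accumulated type-$Q$ module and splits it into two isomorphic copies of a type-$M$ module. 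Pairing the $m$ type-$Q$ factors, after all the tensorings one obtains $2^{[m/2]}$ copies of a single irreducible spin $\widetilde{H}\Gamma_{\la}$-supermodule $W_{\la}$, of type $M$ when $m$ is even and of type $Q$ when $m$ is odd. Since $l(\la)=\sum_{\gamma}l(\la_{\gamma})\equiv m\pmod 2$, this proves the first assertion and ties the type of $W_{\la}$ to the parity of $l(\la)$.

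\emph{Step 3: induction and identification.} By construction the restriction of $W_{\la}$ to $\Gamma^{n}$ is isotypic of the $\Gamma^{n}$-type obtained by placing $|\la_{\gamma}|$ copies of $\gamma$ in the $\gamma$-block of coordinates, and the inertia subgroup of this $\Gamma^{n}$-representation inside $\widetilde{H}\Gamma_{n}$ is exactly $\widetilde{H}\Gamma_{\la}$. The Clifford correspondence -- which goes through verbatim for superalgebras, using only the normality of $\Gamma^{n}$ -- then shows that $\mathrm{Ind}_{\widetilde{H}\Gamma_{\la}}^{\widetilde{H}\Gamma_{n}}W_{\la}$ is irreducible and that $\mathrm{End}_{\widetilde{H}\Gamma_{n}}(\mathrm{Ind}\,W_{\la})\cong\mathrm{End}_{\widetilde{H}\Gamma_{\la}}(W_{\la})$, so induction preserves the $M$/$Q$ type. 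Finally, as $\la$ ranges over $\mathcal{SP}_{n}(\Gamma^{*})$ the $\Gamma^{n}$-isotypes are pairwise non-conjugate and the inner data $\{V_{\la_{\gamma}}\}$ separate the remaining possibilities, so the modules $\mathrm{Ind}_{\widetilde{H}\Gamma_{\la}}^{\widetilde{H}\Gamma_{n}}W_{\la}$ are pairwise non-isomorphic; comparing with the parametrization in Prop.~\ref{p:supermod} (double spin $\leftrightarrow$ $l(\la)$ even, associate pairs $\leftrightarrow$ $l(\la)$ odd) identifies $\mathrm{Ind}_{\widetilde{H}\Gamma_{\la}}^{\widetilde{H}\Gamma_{n}}W_{\la}$ with the irreducible spin $\widetilde{H}\Gamma_{n}$-supermodule indexed by $\la$.

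\emph{Main obstacle.} The genuinely delicate point is carrying the classical Clifford/Mackey machinery -- canonical extensions, the inertia-group correspondence as a type-preserving bijection, and the ``extension $\otimes$ projective representation of the quotient'' description of the inertia-group irreducibles -- into the $\mathbb{Z}_{2}$-graded/spin setting, and bookkeeping the multiplicity $2^{[m/2]}$ and the $M$/$Q$ type along the chain of super tensor products; the only place where the super structure really intervenes is rule (3), $Q\hat{\otimes}Q\cong N\oplus\overline{N}$ with $N\cong\overline{N}$ up to parity, which is precisely what forces the power of $2$.
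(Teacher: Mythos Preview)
The paper does not supply its own proof of this proposition: it is stated with the citation \cite{JW} and no argument is given. Your three-step outline---Clifford theory for each $\gamma$-block, the super tensor bookkeeping with the $Q\hat{\otimes}Q\simeq N\oplus\overline{N}$ rule producing the multiplicity $2^{[m/2]}$, and Mackey--Clifford induction from the inertia subgroup $\widetilde{H}\Gamma_{\la}$---is exactly the argument carried out in \cite{JW} (and is the natural super adaptation of the classical Specht/Macdonald construction for $\Gamma\wr S_n$). So your proposal is correct and agrees with the source the paper invokes; there is simply nothing in the present paper to compare it against beyond the citation.
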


Let $\la$ be a partition-valued function $\la=(\la_x)_{x\in X}$, where
$$\la_x=(\la_x(1), \la_x(2), \ldots)$$
is a partition for each color $x\in X$. We denote by $\bar{\la}$ the ordinary partition obtained from $\la$ by forgetting all the colors, i.e.
its parts consist of all $\la_x(i)$, $x\in X$.
For a finite set $X$, let $\la=(\la_{x})_{x\in X}\in \mathcal{SP}_n(X)$ and define
$J_{\la}=\{x\in X|l(\la_{x}) \hbox{~is odd }\}$ and $J_{\la}^{'}=\{x\in X|l(\la_{x})\hbox{~is even }\}$
such that  $J_{\la}\cup J_{\la}^{'}=X$.
We denote $l(J_{\la})=\sum_{x\in J_{\la}}l(\la_x)$, $||\la||_{J_{\la}}=\sum_{x\in J_{\la}}|\la_x|$
and $m_{J_{\la}}=\sum_{x\in J_{\la}}m_{\la_x}$, where $m_{\la_x}$ is the number of odd parts in $\la_x$.
For each strict partition $\la_{\gamma}$ let $\xi_{\la_{\gamma}}$ be the corresponding irreducible
spin super-character of $\widetilde{H}_{|\la_{\gamma}|}$.

It follows from Proposition \ref{p1} that the irreducible super-character
$$ch(W_{\la})=2^{-[\frac{|J_{\la}|}{2}]}\prod_{\gamma\in\Gamma^{*}}(\gamma^{\otimes |\la_{\gamma}|}\otimes \xi_{\la_{\gamma}}).$$
The induced character $\chi_{\la}=\mbox{Ind}_{\widetilde{H}\Gamma_{\la}}^{\widetilde{H}\Gamma_n}
 ch(W_{\la}) $
is a double spin character when $l(\la)$ is even and $\chi_{\la}^{\pm}=\mbox{Ind}_{\widetilde{H}\Gamma_{\la}}^{\widetilde{H}\Gamma_n}
 ch(W_{\la}^{\pm})$ are associate spin characters when $l(\la)$ is odd.

For a partition-valued function $(\la_{\gamma})=(\la_1, \ldots, \la_l)_{\gamma}$, the partition $\overline{(\la_{\gamma})}=(\la_1, \ldots, \la_l)$ gives rise to
a collection $[\gamma]$ of partition-valued functions $\rho=(\rho_{c})_{c\in\Gamma_{*}}=(\rho_{c_1},\ldots,\rho_{c_{|\Gamma_{*}|}})$ on $\Gamma_*$ by $\overline{\rho}=(\overline{\rho}_{\gamma_1}, \ldots, \overline{\rho}_{\gamma_{|\Gamma^*|}})$ such that
$\overline{\rho}=\overline{(\la_{\gamma})}$.  Clearly $|[\gamma]|=|\Gamma_*|^{l(\rho)}$. For brevity a partition-valued function inside $[\gamma]$ is
usually written as
$\rho_{\gamma}$.  Now suppose $\rho=(\rho_c)_{c\in\Gamma_{*}}\in \mathcal{SP}^1_n(\Gamma_{*})$ is an odd strict partition-valued function and  $\la_{\gamma}\in \mathcal{SP}_{|\la_{\gamma}|}(\Gamma^{*})$ with $l(\la)$ being odd.
Then using (\ref{eqf2}) we have
\begin{equation}\label{eq,2}
\begin{split}ch(W^{+}_{\la})(D_{\rho}^{+})
=&(2\sqrt{-1})^{[\frac{|J_{\la}|}{2}]} \prod_{\gamma\in J_{\la}}
\gamma^{\otimes |\la_{\gamma}|}\otimes\xi^{+}_{\la_{\gamma}}(D_{\rho_{\gamma}}^{+})\cdot\\
& \prod_{\gamma\in J^{'}_{\la}}
\delta(\gamma^{\otimes |\la_{\gamma}|}\otimes\xi_{\la_{\gamma}})(D_{\rho_{\gamma}}^{+}).
\end{split}
\end{equation}

%%%%%%%%%%%%%%%%%%%%%
By Prop. \ref{p:supermod}, irreducible $\widetilde{H}\Gamma_n$-supermodules are indexed by strict parti\-tion-valued functions $\lambda\in
\mathcal{SP}(\Gamma^*)$ and they are type $M$ (resp. $Q$) iff $l(\lambda)$ are even (resp. odd). Correspondingly
 double irreducible spin characters are indexed by strict partition values functions $\lambda$ with even $l(\lambda)$, and pairs of irreducible associate spin characters are indexed by strict partitions $\lambda\in\mathcal{SP}(\Gamma^*)$ with odd $l(\lambda)$.

Let $\xi_{\lambda}$ be the character of the irreducible supermodule associated with partition-valued function
$\lambda$. If $l(\nu)$ is even, then $\xi_{\lambda}$ vanishes at split conjugacy classes
of degree one, i.e., $\xi_{\lambda}(D_{\rho}^{\pm})=0$ for
any odd strict partition-valued function $\rho\in\mathcal{SP}^1(\Gamma_*)$ (see Proposition \ref{p:splitconj}). Therefore, $\xi_{\lambda}$'s are completely given by
the vertex operator calculus in this case (see \cite{JW}). So we are left with the task of fixing $\xi_{\lambda}$ with odd $l(\lambda)$.
In this case $\xi_{\lambda}$ breaks into a pair of associate spin characters $\xi_{\lambda}^{\pm}$. Again the character
values on the even split classes are given by vertex operator calculus, and we only need to worry about
$\xi_{\lambda}$ on the odd split conjugacy classes $D_{\rho}, \rho\in\mathcal{SP}^1(\Gamma_*)$.  The following result computes the remaining character values for
$\widetilde{H}\Gamma_n$ that correspond to strict partition-valued functions of odd lengths.

%%%%%%%%%%%%%%%%%%%

The following theorem gives the irreducible associate spin character values at conjugacy classes
indexed by odd strict partition-valued functions for $\widetilde{H}\Gamma_n$.

\begin{theorem}\label{Segv}
Let $\chi^{+}_{\la}$ be an associate spin character (i.e. $\la=(\la_{\gamma})_{\gamma\in\Gamma^{*}}$ $\in \mathcal{SP}_n(\Gamma^{*})$, $l(\la)$ odd),
 for $\rho\in\mathcal{SP}^1_n(\Gamma_{*})$ we have

(i) when $\rho=(\rho_{\gamma})_{\gamma\in\Gamma^*}$ such that $\rho_{\gamma}\in\mathcal{OSP}_{|\rho_{\gamma}|}(\Gamma_{*})$ for $\gamma\in J_{\la}^{'}$ and $\rho_{\gamma}\in [\la_{\gamma}]$
 for $\gamma\in J_{\la}$ then
\begin{equation}\nonumber
\begin{split}\chi^{+}_{\la}(D^{\pm}_{\rho})
=&\pm
\prod_{\gamma\in\Gamma^{*}}\big(\prod_{c\in\Gamma_{*}}\gamma(c)^{l(\rho_{\gamma}(c))}\big)\cdot\prod_{\gamma\in
J_{\la}^{'}} \xi_{\la_{\gamma}}(t_{\rho_{\gamma}})\cdot\\
& 2^{\frac{l(J_{\la})}{2}}(\sqrt{-1})^{\frac{||\rho||_{J_{\la}}-m_{J_{\la}}}{2}}
\sqrt{\frac{\prod_{\gamma\in
J_{\la}}\prod_{c\in\Gamma_{*}}z_{\rho_{\gamma}(c)}}{2}},
\end{split}
\end{equation}
where $t_{\rho_{\gamma}}\in D_{\rho_{\gamma}}^{+}$ and the value $\prod_{\gamma\in
J_{\la}^{'}}\xi_{\la_{\gamma}}(t_{\rho_{\gamma}})$ is given by
Schur Q-functions.

(ii) $\chi^{+}_{\la}(D_{\rho}^{\pm})=0$, otherwise.
\\
\end{theorem}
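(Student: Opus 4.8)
The plan is to compute $\chi_\lambda^{\pm}$ directly from its realization as an induced character. By Proposition \ref{p1} and the discussion after it, $\chi_\lambda^{\pm}=\mathrm{Ind}_{\widetilde H\Gamma_\lambda}^{\widetilde H\Gamma_n} ch(W_\lambda^{\pm})$ with $ch(W_\lambda)=2^{-[|J_\lambda|/2]}\prod_{\gamma\in\Gamma^*}(\gamma^{\otimes|\lambda_\gamma|}\otimes\xi_{\lambda_\gamma})$. First I would write out the Frobenius formula for a representative $\widetilde x\in D_\rho^{+}$ with $\rho\in\mathcal{SP}^1_n(\Gamma_*)$, namely $\chi_\lambda^{+}(\widetilde x)=\frac1{|\widetilde H\Gamma_\lambda|}\sum_{g:\,g^{-1}\widetilde x g\in\widetilde H\Gamma_\lambda}ch(W_\lambda^{+})(g^{-1}\widetilde x g)$, and reorganize the sum according to the $\widetilde H\Gamma_\lambda$-class of the conjugate, i.e. according to how the (colored) parts of $\rho$ are distributed among the blocks $\gamma\in\Gamma^*$ to produce a tuple $(\rho_\gamma)_{\gamma\in\Gamma^*}$ with $\bigsqcup_\gamma\rho_\gamma=\rho$. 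The number of $g$ yielding a fixed tuple, divided by $|\widetilde H\Gamma_\lambda|$, is a ratio of centralizer orders evaluated by means of $\widetilde Z_\rho=2^{1+l(\rho)}Z_\rho$ and the analogous orders in the blocks.

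Next I would evaluate $ch(W_\lambda^{+})$ on such a conjugate using \eqref{eq,2}, which factors it over $\gamma$: the type-$Q$ blocks $\gamma\in J_\lambda$ give $(2\sqrt{-1})^{[|J_\lambda|/2]}\prod_{\gamma\in J_\lambda}(\gamma^{\otimes|\lambda_\gamma|}\otimes\xi_{\lambda_\gamma}^{+})(D_{\rho_\gamma}^{+})$ and the type-$M$ blocks $\gamma\in J_\lambda'$ give $\prod_{\gamma\in J_\lambda'}\delta(\gamma^{\otimes|\lambda_\gamma|}\otimes\xi_{\lambda_\gamma})(D_{\rho_\gamma}^{+})$. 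In each block the factor $U_\gamma^{\otimes|\lambda_\gamma|}$ contributes $\prod_{c\in\Gamma_*}\gamma(c)^{l(\rho_\gamma(c))}$ by the standard wreath-product trace identity (a $k$-cycle with cycle-product in class $c$ contributes $\gamma(c)$). For $\gamma\in J_\lambda$ the factor $\xi_{\lambda_\gamma}^{+}(D_{\rho_\gamma}^{+})$ is the $\widetilde H_{|\lambda_\gamma|}$-value computed in the previous section: it vanishes unless $\overline{\rho_\gamma}=\overline{\lambda_\gamma}$, that is $\rho_\gamma\in[\lambda_\gamma]$, and then equals $2^{l(\lambda_\gamma)/2}(\sqrt{-1})^{(|\lambda_\gamma|-m_{\lambda_\gamma})/2}\sqrt{\prod_c z_{\rho_\gamma(c)}/2}$ (using that $\rho_\gamma$ is strict, so $z_{\rho_\gamma(c)}$ is the product of the parts of $\rho_\gamma(c)$). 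For $\gamma\in J_\lambda'$, $\xi_{\lambda_\gamma}$ is a double spin character of $\widetilde H_{|\lambda_\gamma|}$, so its difference character on the even class $D_{\rho_\gamma}^{+}$ is governed by Proposition \ref{JO HG} and the remarks closing the previous section; it vanishes unless every part of $\overline{\rho_\gamma}$ is odd, in which case it is the Schur $Q$-function value recorded in the statement as $\xi_{\lambda_\gamma}(t_{\rho_\gamma})$. Assembling the surviving terms — the $J_\lambda$-distribution being pinned down by $\overline{\rho_\gamma}=\overline{\lambda_\gamma}$ together with strictness of $\rho$ — and multiplying the block contributions against $2^{-[|J_\lambda|/2]}$, $(2\sqrt{-1})^{[|J_\lambda|/2]}$, the per-block $2^{l(\lambda_\gamma)/2}$, and the centralizer-index factor, while using that $|J_\lambda|$ is odd (since $l(\lambda)$ is odd), $\sum_{\gamma\in J_\lambda}(|\lambda_\gamma|-m_{\lambda_\gamma})=\|\rho\|_{J_\lambda}-m_{J_\lambda}$ and $\sum_{\gamma\in J_\lambda}l(\lambda_\gamma)=l(J_\lambda)$, the powers of $2$ and $\sqrt{-1}$ collapse to $2^{l(J_\lambda)/2}(\sqrt{-1})^{(\|\rho\|_{J_\lambda}-m_{J_\lambda})/2}$ and the square roots combine to $\sqrt{\prod_{\gamma\in J_\lambda}\prod_c z_{\rho_\gamma(c)}/2}$. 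Part (ii) then follows because any $\rho$ not of the stated form forces a vanishing block factor (either $\overline{\rho_\gamma}\neq\overline{\lambda_\gamma}$ for some $\gamma\in J_\lambda$, or a non-odd part in some $\overline{\rho_\gamma}$ with $\gamma\in J_\lambda'$), and the sign in $\chi_\lambda^{+}(D_\rho^{\pm})=\pm(\cdots)$ comes from $\xi_{\lambda_\gamma}^{\pm}=\pm(\text{common value})$ together with $\chi_\lambda^{+}(zx)=-\chi_\lambda^{+}(x)$ and $D_\rho^{-}=zD_\rho^{+}$.

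The hard part will be the second step's bookkeeping: matching exactly the centralizer-index factor produced by induction against the various powers of $2$ appearing in $ch(W_\lambda)$, in \eqref{eqf}--\eqref{eqf3}, and in the per-block formulas, and checking that once the forced $J_\lambda$-distribution is fixed, the remaining freedom in distributing the odd parts among the $J_\lambda'$-blocks is exactly what the presentation $\rho=(\rho_\gamma)$ in the statement records. A clean cross-check — and an alternative that sidesteps part of the index computation — is to verify that the candidate values satisfy $\langle\chi_\lambda^{+},\chi_\lambda^{+}\rangle_{\mathcal{SP}^1_n(\Gamma_*)}=\tfrac12$ using $\widetilde Z_\rho=2^{1+l(\rho)}Z_\rho$ together with column orthogonality of the characters of $\Gamma$, exactly as in the proof of the $\widetilde H_n$ case; since this inner product is known to equal $\tfrac12$, such a computation simultaneously fixes the normalization and forces the vanishing asserted in (ii).
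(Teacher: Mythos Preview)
Your proposal is correct and follows essentially the same route as the paper: compute $ch(W_\lambda^{+})$ on a representative via the block factorization \eqref{eq,2}, insert the per-block values from the previous section, and then use the inner-product identity $\langle\chi_\lambda^{+},\chi_\lambda^{+}\rangle_{\mathcal{SP}^1_n(\Gamma_*)}=\tfrac12$ to pin down the normalization and force the vanishing in (ii). One point of emphasis: what you call a ``cross-check'' is in fact the paper's primary device---the direct Frobenius computation leaves an undetermined coset-counting factor $K_\rho$, and it is precisely the inequality $\tfrac12\geq\sum_{\rho\in S}|\chi_\lambda^{+}(D_\rho^{+})|^2/\widetilde Z_\rho\geq\tfrac12$ that simultaneously yields $K_\rho=1$ and the vanishing outside the stated support, so you should present that step as essential rather than optional.
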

\begin{proof} (i) We have already seen that
$$\delta(\gamma^{\otimes |\la_{\gamma}|}\otimes\xi_{\la_{\gamma}})(D^{\pm}_{\rho_{\gamma}})
=\pm\prod_{c\in\Gamma_{*}}\gamma(c)^{l(\rho_{\gamma}(c))}\cdot \delta(\xi_{\la_{\gamma}})(t_{\rho_{\gamma}});$$
$$ \gamma^{\otimes|\la_{\gamma}|}\otimes\xi^{+}_{\la_{\gamma}}(D^{\pm}_{\rho_{\gamma}})
=\pm\prod_{c\in\Gamma_{*}}\gamma(c)^{l(\rho_{\gamma}(c))}\cdot\xi^{+}_{\la_{\gamma}}(t_{\rho_{\gamma}}).$$
Note that we will show that there is only one left coset $T$ of $\widetilde{H}\Gamma_{\la}$ in $\widetilde{H}\Gamma_n$ such that
$(g, a_Is)T=T$. Suppose there are $K_{\rho}$ such
cosets. It follows from Prop. \ref{p1} that (as $l(\la)$ is odd, so $|J_{\la}|$ is odd)
\begin{equation}
\begin{split}\chi^{+}_{\la}(D^{\pm}_{\rho})
=&\pm K_{\rho}\cdot2^{\frac{|J_{\la}|-1}{2}}
\prod_{\gamma\in\Gamma^{*}}\big(\prod_{c\in\Gamma_{*}}\gamma(c)^{l(\rho_{\gamma}(c))}\big)\cdot\prod_{\gamma\in
J_{\la}^{'}} \xi_{\la_{\gamma}}(t_{\rho_{\gamma}})\cdot\\
& \prod_{\gamma\in
J_{\la}}\big(2^{\frac{l(\la_{\gamma})}{2}}(\sqrt{-1})^{\frac{|\rho_{\gamma}|-m_{\la}}{2}}
\sqrt{\frac{\prod_{c\in\Gamma_{*}}z_{\rho_{\gamma}(c)}}{2}}\big)\\
=&\pm K_{\rho}
\prod_{\gamma\in\Gamma^{*}}\big(\prod_{c\in\Gamma_{*}}\gamma(c)^{l(\rho_{\gamma}(c))}\big)\cdot\prod_{\gamma\in
J_{\la}^{'}} \xi_{\la_{\gamma}}(t_{\rho_{\gamma}})\cdot\\
& 2^{\frac{l(J_{\la})}{2}}(\sqrt{-1})^{\frac{||\rho||_{J_{\la}}-m_{J_{\la}}}{2}}
\sqrt{\frac{\prod_{\gamma\in
J_{\la}}\prod_{c\in\Gamma_{*}}z_{\rho_{\gamma}(c)}}{2}}\\
\end{split}
\end{equation}

(ii) The first case:  if $\rho$ can not be decomposed into $\cup_{\gamma\in\Gamma^{*}}(\rho_{\gamma})$
 such that $\rho_{\gamma}\in \mathcal{SP}_{|\la_{\gamma}|}(\Gamma_{*})$,
 then we have $\chi^{+}_{\la}(D^{+}_{\rho})=0$.
 So we can assume that $\rho=\cup_{\gamma\in\Gamma^{*}}(\rho_{\gamma})$ such that $\rho_{\gamma}\in \mathcal{SP}_{|\la_{\gamma}|}(\Gamma_{*})$
and $\chi^{+}_{\la}(D^{+}_{\rho})\neq 0$.

(1) When $\rho_{\gamma}\in\mathcal{OSP}_{|\rho_{\gamma|}}(\Gamma_{*})$ for $\gamma\in J_{\la}^{'}$, we claim that $\rho_{\gamma}\in [\la_{\gamma}]$ for $\gamma\in J_{\la}$. In fact, let $S$ be the set of
conjugacy classes $\rho$ such that $\rho=\cup_{\gamma\in\Gamma^{*}}(\rho_{\gamma})$. It is easy to see that if $\xi_{\la_{\gamma}}(t_{\rho_{\gamma}})$ is nonzero then $\rho_{\gamma}$
must be in
$\mathcal{OSP}_{|\la_{\gamma}|}(\Gamma_{*}):=\mathcal{OP}_{|\la_{\gamma}|}(\Gamma_{*})\cap \mathcal{SP}_{|\la_{\gamma}|}(\Gamma_{*})$ for $\gamma\in J_{\la}^{'}$
and  $\rho_{\gamma}$ must be in $[\la_{\gamma}]$ for $\gamma\in J_{\la}$. Then
\begin{equation}\label{eq:4}
\begin{split}
&\langle{\chi^{+}_{\la}}, \chi^{+}_{\la}\rangle_{\mathcal{SP}^1_n(\Gamma_{*})}\geq \sum_{\rho\in S}\frac{1}{\widetilde{Z}_{\rho}}|\chi^{+}_{\la}(\rho)|^2
=\sum_{\rho\in S}\frac{1}{Z_{\rho}}|\chi^{+}_{\la}(D^{+}_{\rho})|^2\\
=&\sum_{\rho\in S} K_{\rho}^2\frac{1}{\prod_{\gamma\in {\Gamma^{*}}}2^{l(\rho_{\gamma})}\prod_{c\in\Gamma_{*}}z_{\rho_{\gamma}(c)}\zeta_{c}^{l(\rho_{\gamma}(c))}}\cdot\prod_{\gamma\in J_{\la}}\prod_{c\in\Gamma_{*}}\gamma(c)^{2l(\rho_{\gamma}(c))}\cdot\\
 & \prod_{\gamma\in
J_{\la}^{'}}\prod_{c\in\Gamma_{*}} \gamma(c)^{2l(\rho_{\gamma}(c))}\xi^2_{\la_{\gamma}}(t_{\rho_{\gamma}})\cdot2^{l(J_{\la})} \frac{\prod_{\gamma\in J_{\la}^{'}}\prod_{c\in\Gamma_{*}}z_{\rho_{\gamma}(c)}}{2}\\
\geq&\frac{1}{2}\sum_{\rho\in S}\prod_{\gamma\in J_{\la}}\prod_{c\in\Gamma_{*}}\frac{\gamma(c)^{2l(\rho_{\gamma}(c))}}{\zeta_{c}^{l(\rho_{\gamma}(c))}}\cdot \prod_{\gamma\in J_{\la}^{'}}\frac{\prod_{c\in\Gamma_{*}}\gamma(c)^{2l(\rho_{\gamma}(c))}\xi^2_{\la_{\gamma}}(t_{\rho_{\gamma}})}{2^{l(\rho_{\gamma})}\prod_{c\in\Gamma_{*}}
\zeta_c^{l(\rho_{\gamma}(c))}}\\
\geq&\frac{1}{2}\prod_{\gamma\in J_{\la}}(\sum_{\rho_{\gamma}\in [\la_{\gamma}]}\frac{1}{\prod_{c\in\Gamma_{*}}\zeta_{c}^{l(\rho_{\gamma}(c))}}\prod_{c\in\Gamma_{*}}\gamma(c)^{2l(\rho_{\gamma}(c))})\cdot\\
&\prod_{\gamma\in {J_{\la}^{'}}}(\sum_{\rho_{\gamma}\in \mathcal{OSP}_{|\la_{\gamma}|}(\Gamma_{*}) }\frac{\prod_{c\in\Gamma_{*}}\gamma(c)^{2l(\rho_{\gamma}(c))}\xi^2_{\la_{\gamma}}(t_{\rho_{\gamma}})}{2^{l(\rho_{\gamma})}Z_{\rho_{\gamma}}})\cdot \\
 =&\frac{1}{2}\prod_{\gamma\in J_{\la}}\langle\gamma^{\otimes l(\rho_{\gamma})},\gamma^{\otimes l(\rho_{\gamma})}\rangle_{\Gamma^{l(\rho_{\gamma})}}
 =\frac{1}{2}.
\end{split}
\end{equation}
We have known that $\langle{\chi^{+}_{\la}}, \chi^{+}_{\la}\rangle_{\mathcal{SP}^1_n(\Gamma_{*})}=\frac{1}{2}$, so $\chi^{+}_{\la}(D^{\pm}_{\rho})=0$ if  $\rho_{\gamma}\notin [\la_{\gamma}]$ for $\gamma\in J_{\la}$
and also $K_{\rho}=1$.

(2) If $\rho_{\gamma}\notin\mathcal{OSP}_{|\la_{\gamma}|}(\Gamma_{*})$ for $\gamma\in J_{\la}^{'}$, then there exists  one $\rho_{\gamma}$ not in $\mathcal{OP}_{|\la_{\gamma}|}(\Gamma_{*})$ for $\gamma\in J_{\la}^{'}$. Meanwhile, since $\xi_{\la_{\gamma}}$ is a double spin character
 when $\gamma\in J_{\la}^{'}$, it is known that it only has nonzero values at even conjugacy classes (i.e. in $\mathcal{OP}_n(\Gamma_{*})$). Hence we have
$\xi_{\la_{\gamma}}(t_{\rho_{\gamma}})=0$, therefore ${\chi^{+}_{\la}}(D^{\pm}_{\rho})=0$.
\end{proof}

With this result we have determined all remaining character values of the spin characters
for $\widetilde{H}\Gamma_n$.

 \bibliographystyle{amsalpha}

 \bibliographystyle{amsalpha}
\end{document}